\newenvironment{eq}{\begin{equation}}{\end{equation}}
\newenvironment{proof}{{\bf Proof}:}{\vskip 5mm }
\newtheorem{proposition}{Proposition}[subsection]
\newtheorem{lemma}[proposition]{Lemma}
\newtheorem{definition}[proposition]{Definition}
\newtheorem{theorem}[proposition]{Theorem}
\newtheorem{example}[proposition]{Example}
\newtheorem{remark}[proposition]{Remark}
\newtheorem{problem}[proposition]{Problem}
\newtheorem{construction}[proposition]{Construction}
\newcommand{\llabel}[1]{\label{#1}}
\newcommand{\comment}[1]{}
\newcommand{\sr}{\rightarrow}
\newcommand{\uu}{\underline}
\newcommand{\wt}{\widetilde}
\newcommand{\dd}{\diamond}
\newcommand{\spc}{{\,\,\,\,\,\,\,}}
\begin{document}
\parskip = 2mm
\begin{center}
{\bf\Large Products of families of types in the C-systems defined by a universe category\footnote{\em 2000 Mathematical Subject Classification: 
03F50, % metamathematics of constructive systems 
18D99, % Categories with structures, none of the above
03B15, % higher-order logic and type theory
18D15, % Closed categories (closed monoidal and Cartesian closed categories, etc.)
% 03B22, % abstract deductive systems 
% 03G25 % other algebras related to logic 
}}

%??? Add Pi without the eta-rule.

\vspace{3mm}

{\large\bf Vladimir Voevodsky}\footnote{School of Mathematics, Institute for Advanced Study,
Princeton NJ, USA. e-mail: vladimir@ias.edu}$^,$\footnote{Work on this paper was supported by NSF grant 1100938.}
\vspace {3mm}

{July 2015}  
\end{center}
\begin{abstract}
We introduce the notion of a $(\Pi,\lambda)$-structure on a C-system and show that C-systems with $(\Pi,\lambda)$-structures are constructively equivalent to contextual categories with products of families of types.  We then show how to construct $(\Pi,\lambda)$-structures on C-systems of the form $CC({\cal C},p)$ defined by a universe $p$ in a locally cartesian closed category $\cal C$ from a simple pull-back square based on $p$.  In the last section we prove a theorem that asserts that our construction is functorial. 
\end{abstract}

\subsection{Introduction}

The concept of a C-system in its present form was introduced in \cite{Csubsystems}. The type of the C-systems is constructively equivalent to the type of contextual categories defined by Cartmell in \cite{Cartmell1} and \cite{Cartmell0} but the definition of a C-system is slightly different from the Cartmell's foundational definition.

In this paper we consider what might be the most important structure on C-systems - the structure that corresponds, for the syntactic C-systems, to the operations of dependent product,  $\lambda$-abstraction and application. A C-system formulation of this structure was introduced by John Cartmell in \cite[pp. 3.37 and 3.41]{Cartmell0} as a part of what he called a strong M.L. structure. It was studied further by Thomas Streicher in \cite[p.71]{Streicher} who called a C-system (contextual category) together with such a structure a ``contextual category with products of families of types''. 

We first show that the structure that Cartmell defined is equivalent to another structure, which we call a $(\Pi,\lambda)$-structure. The proof of this equivalence consists of Constructions \ref{2015.03.13.constr1} and \ref{2015.03.15.constr1} (of mappings in both directions) and Lemmas \ref{2015.03.15.l1} and \ref{2015.03.15.l2} showing that these mappings are mutually inverse. 

Then we consider the case of C-systems of the form $CC({\cal C},p)$ introduced in \cite{Cfromauniverse}. They are defined, in a functorial way, by a category $\cal C$ with a final object and a morphism $p:\wt{U}\sr U$ in $\cal C$  together with the choice of pull-backs of $p$ along all morphisms in $\cal C$. A morphism with such choices is called a universe in $\cal C$. 
An important feature of this construction is that the C-systems $CC({\cal C},p)$ corresponding to different choices of pull-backs and different choices of final objects are canonically isomorphic. This fact makes it possible to say that $CC({\cal C},p)$ is defined by $\cal C$ and $p$. 

We provide several intermediate results about $CC({\cal C},p)$ when $\cal C$ is a locally cartesian closed category leading to the main result of this paper - Construction \ref{2015.03.17.constr3} that produces a $(\Pi,\lambda)$-structure on $CC({\cal C},p)$ from a simple pull-back square based on $p$. This construction was first announced in \cite{CMUtalk}.  It and the ideas that it is based on are among the most important ingredients of the construction of the univalent model of the Martin-Lof type theory. 

The methods of this paper are fully constructive. It is also written in the formalization-ready style that is in such a way that no long arguments are hidden even when they are required only to substantiate an assertion that may feel obvious to readers who are closely associated with a particular tradition of mathematical thought. 

In this paper we continue to use the diagrammatic order of writing composition of morphisms, i.e., for $f:X\sr Y$ and $g:Y\sr Z$ the composition of $f$ and $g$ is denoted by $f\circ g$.

I am grateful to the Department of Computer Science and Engineering of the University of Gothenburg and Chalmers University of Technology for its the hospitality during my work on the paper.

%??? Check the references suggested by the third referee.

\subsection{Products of families of types and $(\Pi,\lambda)$-structures}
Let $CC$ be a C-system.  For $\Gamma\in Ob(CC)$, let $Ob_{n}(\Gamma)$ be the set of elements $\Delta$ in $Ob$ such that $l(\Delta)\ge n+l(\Gamma)$ and $ft^n(\Delta)=\Gamma$ and $\wt{Ob}_n(\Gamma)$ the set of elements $s\in\wt{Ob}(CC)$ such that $s:ft(\Delta)\sr \Delta$ where $\Delta\in Ob_n(\Gamma)$. For $n=0$ we will abbreviate $\wt{Ob}_0(\Gamma)$ as $\wt{Ob}(\Gamma)$. Note that in view of the definition of $\wt{Ob}$ we have $\wt{Ob}(X)=\emptyset$ if $l(X)=0$. 

For $f:\Gamma'\sr \Gamma$ the functions $\Delta \mapsto f^*(\Delta,n)$ and $s\mapsto f^*(s,n)$, defined in \cite{Csubsystems} as iterated canonical pull-backs of objects and sections respectively, give us functions:
$$Ob_n(\Gamma)\sr Ob_n(\Gamma')$$
$$\wt{Ob}_n(\Gamma)\sr \wt{Ob}_n(\Gamma')$$
which we will write simply as $f^*$. 

Let us note also that if $\Delta,\Delta'\in Ob(\Gamma)$, 
%??? what is Ob(\Gamma)
$u:\Delta\sr \Delta'$ is a morphism over $\Gamma$ and $f:\Gamma'\sr \Gamma$ is a morphism then, using the fact the the canonical squares are pull-back, we get a morphism $f^*(\Delta)\sr f^*(\Delta')$ that we denote by $f^*(u)$. 

The structure of ``products of families of types'' is defined in \cite[pp.3.37 and 3.41]{Cartmell0} and also considered in \cite[p.71]{Streicher}. Let us remind this definition here. 
\begin{definition}
\llabel{2015.03.17.def1}
The structure of products of families of types on a C-system $CC$ is a collection of data of the form:
\begin{enumerate}
\item for every $\Gamma\in Ob$ a function $\Pi_{\Gamma}:Ob_2(\Gamma) \sr Ob_1(\Gamma)$, which we write simply as $\Pi$,
\item for every $\Gamma$ and $B\in Ob_2(\Gamma)$ a morphism $Ap_B:p_{A}^*(\Pi(B))\sr B$ over $A$, where $A=ft(B)$,
\end{enumerate}
such that:
\begin{enumerate}
\item  for any $\Gamma$ and $B\in Ob_2(\Gamma)$ the map $\lambda inv_{Ap}:\wt{Ob}(\Pi(B))\sr \wt{Ob}(B)$ defined as:
$$s\mapsto p_A^*(s)\circ Ap_B$$
is a bijection,
\item for any $f:\Gamma'\sr \Gamma$ the square 
$$
\begin{CD}
Ob_2(\Gamma) @>\Pi_{\Gamma}>> Ob_1(\Gamma)\\
@Vf^*VV @VVf^*V\\
Ob_2(\Gamma') @>\Pi_{\Gamma'}>> Ob_1(\Gamma')\\
\end{CD}
$$
commutes,
\item for any $\Gamma$, $B\in Ob_2(\Gamma)$ and $f:\Gamma\sr \Gamma'$ one has $f^*(Ap_B)=Ap_{f^*(B)}$.
\end{enumerate} 
\end{definition}
We will show in the next section how to construct products of families of types on C-systems of the form $CC({\cal C},p)$. For this construction we first need to introduce another structure on C-systems and construct a bijection between the set of the products of families of types structures and this new structures.
\begin{definition}
\llabel{2015.03.09.def1}
Let $CC$ be a C-system. A pre-$(\Pi,\lambda)$-structure on $CC$ is a pair of families of functions
$$\Pi_{\Gamma}:Ob_2(\Gamma)\sr Ob_1(\Gamma)$$
$$\lambda_{\Gamma}:\wt{Ob}_2(\Gamma)\sr \wt{Ob}_1(\Gamma)$$
such that $\partial(\lambda(s))=\Pi(\partial(s))$ and one has:
\begin{enumerate}
\item for any $f:\Gamma'\sr \Gamma$ the square
\begin{eq}
\llabel{2015.03.09.sq1}
\begin{CD}
Ob_2(\Gamma) @>\Pi_{\Gamma}>> Ob_1(\Gamma)\\
@Vf^*VV @VVf^*V\\
Ob_2(\Gamma') @>\Pi_{\Gamma'}>> Ob_1(\Gamma')\\
\end{CD}
\end{eq}
commutes,
\item for any $f:\Gamma'\sr \Gamma$ the square
\begin{eq}
\llabel{2015.03.09.sq2}
\begin{CD}
\wt{Ob}_2(\Gamma) @>\lambda_{\Gamma}>> \wt{Ob}_1(\Gamma)\\
@Vf^*VV @VVf^*V\\
\wt{Ob}_2(\Gamma') @>\lambda_{\Gamma'}>> \wt{Ob}_1(\Gamma')\\
\end{CD}
\end{eq}
commutes.
\end{enumerate}
\end{definition}
The condition that $\partial(\lambda(s))=\Pi(\partial(s))$ can also be seen as the assertion that the square:
\begin{eq}
\llabel{2015.03.09.eq1}
\begin{CD}
\wt{Ob}_2(\Gamma) @>\lambda_{\Gamma}>> \wt{Ob}_1(\Gamma)\\
@V\partial VV @VV \partial V\\
Ob_2(\Gamma) @>\Pi_{\Gamma}>> Ob_1(\Gamma)
\end{CD}
\end{eq}
commutes.  
\begin{definition}
\llabel{2015.03.09.def2}
A pre-$(\Pi,\lambda)$-structure is called a $(\Pi,\lambda)$-structure if for any $\Gamma\in Ob_{\ge 2}$ the square (\ref{2015.03.09.eq1}) is a pull-back square or, equivalently, if the functions 
$$\lambda'_{\Gamma}:\wt{Ob}(\Gamma)\sr \wt{Ob}(\Pi(\Gamma))$$
defined by $\lambda_{\Gamma}$ are bijections. 
\end{definition}

We are going to show that, for a given family of functions $\Pi_{\Gamma}$, the type of $(\Pi,\lambda)$-structures over $\Pi_{\Gamma}$ is equivalent to the type of products of families of types over the same $\Pi_{\Gamma}$.

We first reformulate the structure of products of families slightly. Instead of considering $p_A^*(\Pi(B))$ we will consider an object that is isomorphic (but not equal!) to it, namely $p_{\Pi(B)}^*(A)$. Our structure will then be a family of maps $\Pi$ as before together with, for every $\Gamma$ and $B\in Ob_2(\Gamma)$, a morphism $Ap'_B:p_{\Pi(B)}^*(A)\sr B$ over $A$ such that the map $\lambda inv'_{Ap'}:\wt{Ob}(\Pi(B))\sr \wt{Ob}(B)$ defined as:
$$s\mapsto q(s,p_{\Pi(B)}^*(A))\circ Ap'_B$$
is a bijection. This can be seen on the following diagram that also contains other elements that will be needed in the construction below. 
\begin{eq}
\llabel{2015.03.13.eq1}
\begin{CD}
B @>q(s,p_{\Pi(B)}^*(B,2),2)>> p_{\Pi(B)}^*(B,2) @>q(p_{\Pi(B)},B,2)>> B\\
@Vp_B VV @VVV @VVp_B V\\
A @>q(s,p_{\Pi(B)}^*(A))>> p_{\Pi(B)}^*(A) @>q(p_{\Pi(B)},A)>> A\\
@Vp_A VV @VVV @Vp_A VV\\
\Gamma @>s>> \Pi(B) @>p_{\Pi(B)}>> \Gamma
\end{CD}
\end{eq}

We now state the problem which we will provide a construction for:
\begin{problem}
\llabel{2015.03.13.prob1}
Let $CC$ be a C-system and let $\Pi$ be a family of functions 
$$\Pi_{\Gamma}:Ob_2(\Gamma)\sr Ob_1(\Gamma)$$ 
given for all $\Gamma\in Ob$ such that the corresponding squares of the form (\ref{2015.03.09.sq1}) commute. 

To construct a bijection between the following two types of structure:
\begin{enumerate}
\item for every $\Gamma$ and $B\in Ob_2(\Gamma)$ a bijection  
$$\lambda'_{B}:\wt{Ob}(B)\sr \wt{Ob}(\Pi(B))$$
such that for every morphism $f:\Gamma'\sr \Gamma$ the square 
$$
\begin{CD}
\wt{Ob}(B) @>\lambda'_B >> \wt{Ob}(\Pi(B))\\
@Vf^* VV @VV f^* V\\
\wt{Ob}(f^*(B)) @>\lambda'_{f^*(B)} >> \wt{Ob}(\Pi(f^*(B)))
\end{CD}
$$
defined by $f$, commutes. 
\item  for every $\Gamma\in Ob$ and $B\in Ob_2(\Gamma)$ a morphism $Ap'_{B}:p_{\Pi(B)}^*(A)\sr B$ over $A$, where $A=ft(B)$, such that the map 
$$\lambda inv'_{Ap'}:\wt{Ob}(\Pi(B))\sr \wt{Ob}(B)$$
defined as:
$$s\mapsto q(s,p^*_{\Pi(B)}(A))\circ Ap'_B$$
is a bijection and such that for every morphism $f:\Gamma'\sr \Gamma$ and $B\in Ob_2(\Gamma)$ one has $f^*(Ap'_B)=Ap'_{f^*(B)}$. 
\end{enumerate}
\end{problem}
We will construct the solution in four steps - first a function from structures of the first kind to structures of the second, then a function in the opposite direction and the  two lemmas proving that the first function is a left and a right inverse to the second. 

\begin{construction}
\llabel{2015.03.13.constr1}\rm
Let us show how to construct a structure of the second kind from a structure of the first kind. 
To define $Ap'$ consider the digram of $\Pi$'s  defined by the diagram (\ref{2015.03.13.eq1}):
\begin{eq}
\llabel{2015.03.13.eq2}
\begin{CD}
\Pi(B) @. \Pi(p_{\Pi(B)}^*(B,2)) @. \Pi(B)\\
@VVV @VVV @VVV\\
\Gamma @>s>> \Pi(B) @>p_{\Pi(B)}>> \Gamma
\end{CD}
\end{eq}
Note that since $\Pi$ is stable under pull-backs we have 
$$\Pi(p_{\Pi(B)}^*(B,2))=p_{\Pi(B)}^*(\Pi(B))$$
and therefore the diagonal $\delta_{\Pi(B)}$ gives us an element in $\wt{Ob}(\Pi(p_{\Pi(B)}^*(B,2)))$. Applying to it the inverse of our $\lambda'$ we get an element $ap:\wt{Ob}(p_{\Pi(B)}^*(B,2))$. Define:
$$Ap'_B=ap\circ  q(p_{\Pi(B)},B,2)$$
Let us prove that these morphisms satisfy the conditions of bijectivity and the stability under pull-backs. We need to show that the mappings $\lambda inv'_{Ap'}:\wt{Ob}(\Pi(B))\sr \wt{Ob}(B)$ defined as:
$$s\mapsto q(s,p_{\Pi(B)}^*(A))\circ Ap'_B$$
are bijective. It is sufficient to show that the mappings $\lambda inv'_{Ap'}$ are inverse to the ones given by $\lambda'$ from at least one side as any inverse to a bijection is a bijection. 

We do it in two steps. First let 
$$\lambda inv''(s)=s^*(ap,2)=q(s,p_{\Pi(B)}^*(A))^*(ap)$$
Let us show that $\lambda inv'' = \lambda inv'_{Ap'}$. Indeed:
$$q(s,p_{\Pi(B)}^*(A))^*(ap)=q(s,p_{\Pi(B)}^*(A))^*(ap)\circ q(s,p_{\Pi(B)}^*(B,2),2)\circ q(p_{\Pi(B)},B,2)=$$
$$q(s,p_{\Pi(B)}^*(A))\circ ap\circ q(p_{\Pi(B)},B,2) = q(s,p_{\Pi(B)}^*(A))\circ Ap'_B$$
Now we have:
$$\lambda'(\lambda inv''(s))=\lambda'(s^*(ap,2))=s^*(\lambda'(ap),1)=s^*(\delta_{\Pi(B)},1)=s.$$
It remains to check that the mappings $Ap'$ are stable under the base change. Since the base change of morphisms commutes with compositions this follows if we know that $ap$ is stable and $q(-,-,2)$ is stable. The second fact is verified easily from the axioms of a C-system
%###???
 and the first follows from the stability of $\delta$ and the pull-back and the assumption that $\lambda'$ is stable under pull-back.  
\end{construction}
\begin{construction}
\llabel{2015.03.15.constr1}\rm
Let us now construct a structure of the first kind from a structure of the second. This is straightforward since a construction of the second kind gives is bijections $\lambda inv'_{Ap'}$ and the inverse to these bijections are bijections required for the structure of the first kind. The fact that the bijections that we obtain in this way are stable under the pull-backs follows from the fact that the pull-backs commute with compositions, that they take morphisms of the form $q(-,-,1)$ to morphisms of the same form and from our assumption that morphisms $Ap'$ are stable under composition. 
\end{construction}
Let us denote the map of Construction \ref{2015.03.13.constr1} by $C1$ and the map of Construction \ref{2015.03.15.constr1} by $C2$.
\begin{lemma}
\llabel{2015.03.15.l1}
For a structure of the first kind $\lambda'$ one has $C2(C1(\lambda'))=\lambda'$. 
\end{lemma}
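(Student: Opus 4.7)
The plan is to reduce the statement to a computation already carried out in Construction \ref{2015.03.13.constr1}. By definition, $C1(\lambda')$ produces the morphism family $Ap'_B = ap \circ q(p_{\Pi(B)},B,2)$, where $ap = (\lambda')^{-1}(\delta_{\Pi(B)}) \in \wt{Ob}(p_{\Pi(B)}^*(B,2))$, and $C2$ applied to any structure of the second kind returns fibrewise the inverse of the associated bijection $\lambda inv'_{Ap'}$. So my first step is to observe that the task reduces to verifying $(\lambda inv'_{Ap'_{C1(\lambda')}})^{-1} = \lambda'$, and since the inverse of a bijection is unique, it will suffice to exhibit $\lambda'$ as a one-sided inverse of $\lambda inv'_{Ap'}$ for the $Ap'$ produced by $C1$.

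Next I would invoke the second half of Construction \ref{2015.03.13.constr1}, which supplies precisely this identity. Recall that there one introduces the auxiliary map $\lambda inv''(s) = s^*(ap,2)$, shows that $\lambda inv'' = \lambda inv'_{Ap'}$, and then computes
$$\lambda'(\lambda inv''(s)) = \lambda'(s^*(ap,2)) = s^*(\lambda'(ap),1) = s^*(\delta_{\Pi(B)},1) = s,$$
where the second equality uses the assumed stability of $\lambda'$ under pull-backs and the third uses the defining property $\lambda'(ap) = \delta_{\Pi(B)}$ built into the definition of $ap$. The outcome is the composite identity $\lambda' \circ \lambda inv'_{Ap'} = \mathrm{id}_{\wt{Ob}(\Pi(B))}$.

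Finally, since $\lambda'_B$ is a bijection by hypothesis on structures of the first kind, uniqueness of the inverse forces $\lambda inv'_{Ap'_B} = (\lambda'_B)^{-1}$, and therefore $(\lambda inv'_{Ap'_B})^{-1} = \lambda'_B$. Reading this fibrewise over all $\Gamma \in Ob$ and all $B \in Ob_2(\Gamma)$ yields $C2(C1(\lambda')) = \lambda'$, as required. I do not anticipate a genuine obstacle: the substantive calculation has already been performed inside Construction \ref{2015.03.13.constr1}, and the present lemma is in essence a bookkeeping consequence — the only subtlety is to cite that equation in the correct direction and to appeal to the bijectivity of $\lambda'$ so as to upgrade the one-sided inverse relation to a two-sided one.
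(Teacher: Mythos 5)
Your proof is correct and follows essentially the same route as the paper: the paper's own argument is precisely that Construction \ref{2015.03.13.constr1} established $\lambda'(\lambda inv'_{Ap'}(s))=s$, so $\lambda inv'_{Ap'}=(\lambda')^{-1}$, and Construction \ref{2015.03.15.constr1} then returns the inverse of $\lambda inv'_{Ap'}$, which is $\lambda'$ again. Your added remark about using bijectivity of $\lambda'$ to upgrade the one-sided identity to the full inverse relation is exactly the implicit step in the paper's one-line proof.
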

\begin{proof}
This is immediate since in Construction \ref{2015.03.13.constr1} we proved that the $\lambda inv'_{Ap'}$ that we have constructed are bijections by showing that they are inverses to the $\lambda'$'s that we started with and in Construction \ref{2015.03.15.constr1} we defined $\lambda'$'s as inverses to $\lambda inv'_{Ap'}$.
\end{proof}
\begin{lemma}
\llabel{2015.03.15.l2}
For a structure of the second kind $Ap'$ one has $C1(C2(Ap'))=Ap'$. 
\end{lemma}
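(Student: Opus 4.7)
The plan is to unfold $C1(C2(Ap'))_B$ using the definitions in Constructions \ref{2015.03.13.constr1} and \ref{2015.03.15.constr1}, and reduce the resulting composition to $Ap'_B$ in three elementary moves.

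Setting $\lambda':=C2(Ap')=(\lambda inv'_{Ap'})^{-1}$, Construction \ref{2015.03.13.constr1} expresses $C1(\lambda')_B$ as $ap\circ q(p_{\Pi(B)},B,2)$, where $ap\in\wt{Ob}(p_{\Pi(B)}^*(B,2))$ is the image of the diagonal section $\delta_{\Pi(B)}$ under the inverse of $\lambda'$ at base $\Pi(B)$ and family $p_{\Pi(B)}^*(B,2)$. First I would use the equality $\lambda'^{-1}=\lambda inv'_{Ap'}$ together with the stability identity $Ap'_{p_{\Pi(B)}^*(B,2)}=p_{\Pi(B)}^*(Ap'_B)$ (from Problem \ref{2015.03.13.prob1}(2)) and the defining formula for $\lambda inv'$ to rewrite $ap$ as $q(\delta_{\Pi(B)},-)\circ p_{\Pi(B)}^*(Ap'_B)$, where $q(\delta_{\Pi(B)},-)$ denotes the canonical pullback morphism at the appropriate object sitting over $p_{\Pi(B)}^*(\Pi(B))$.

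Next I would invoke the naturality square that defines $f^*$ on morphisms, namely $p_{\Pi(B)}^*(Ap'_B)\circ q(p_{\Pi(B)},B,2)=q(p_{\Pi(B)},p_{\Pi(B)}^*(A),2)\circ Ap'_B$, to rewrite $C1(C2(Ap'))_B$ as the three-fold composition $q(\delta_{\Pi(B)},-)\circ q(p_{\Pi(B)},p_{\Pi(B)}^*(A),2)\circ Ap'_B$. By functoriality of $q(-,X)$ in its first argument combined with the defining property $\delta_{\Pi(B)}\circ q(p_{\Pi(B)},\Pi(B))=id_{\Pi(B)}$ of the diagonal, the first two factors collapse to $q(id_{\Pi(B)},p_{\Pi(B)}^*(A))=id_{p_{\Pi(B)}^*(A)}$, and we are left with $Ap'_B$ as required.

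The main obstacle is bookkeeping rather than mathematical substance: every occurrence of $q(-,-,n)$ must be interpreted at the correct object and length index, and the iterated pullbacks in diagram (\ref{2015.03.13.eq1}) must be chased accurately — in particular, the object over which $q(\delta_{\Pi(B)},-)$ sits, namely $p_{p_{\Pi(B)}^*(\Pi(B))}^*(p_{\Pi(B)}^*(A))$, needs to be identified with the pullback of $p_{\Pi(B)}^*(A)$ along $q(p_{\Pi(B)},\Pi(B))$ so that the functoriality argument applies. Once these identifications are made, the proof is exactly the three-step reduction above: stability of $Ap'$, naturality of the pullback of morphisms, and the diagonal identity.
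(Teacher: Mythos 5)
Your proposal is correct and follows essentially the same route as the paper's proof: unfold $C1(C2(Ap'))_B$ as $\lambda inv'_{Ap'}(\delta_{\Pi(B)})\circ q(p_{\Pi(B)},B,2)$, push $Ap'_B$ past the canonical morphisms using its pullback-stability (your two-step split into $Ap'_{p_{\Pi(B)}^*(B,2)}=p_{\Pi(B)}^*(Ap'_B)$ plus the naturality square is just the paper's single identity $Ap'_{f^*(B,2)}\circ q(f,B,2)=q(q(f,\Pi(B)),p^*_{\Pi(B)}(A))\circ Ap'_B$ written in two moves), and then collapse the two remaining $q$'s via $q(\delta_{\Pi(B)}\circ q(p_{\Pi(B)},\Pi(B)),p^*_{\Pi(B)}(A))=q(Id,-)=Id$. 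The bookkeeping identifications you flag are exactly the ones the paper relies on implicitly.
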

\begin{proof}
This amounts to checking that
$$\lambda inv'_{Ap'}(\Delta_{\Pi(B)})\circ q(p_{\Pi(B)},B,2)=Ap'_B$$
Opening up the definition of $\lambda inv'$ we get the equation
$$q(\delta_{\Pi(B)},p^*_{p^*_{\Pi(B)}(\Pi(B))}( p^*_{\Pi(B)}(A))) \circ Ap'_{p^*_{\Pi(B)}(B,2)}q(p_{\Pi(B)},B,2)=Ap'_B$$
We have for any $f:\Gamma'\sr \Gamma$:
$$Ap'_{f*(B,2)}\circ q(f,B,2)=q(q(f,\Pi(B)),p^*_{\Pi(B)}(A))\circ Ap'_B$$
and our equation becomes 
$$q(\delta_{\Pi(B)},p^*_{p^*_{\Pi(B)}(\Pi(B))}( p^*_{\Pi(B)}(A))) \circ q(q(p_{\Pi(B)},\Pi(B)),p^*_{\Pi(B)}(A))\circ Ap'_B=Ap'_B$$
Which follows from:
$$q(\delta_{\Pi(B)},p^*_{p^*_{\Pi(B)}(\Pi(B))}( p^*_{\Pi(B)}(A))) \circ q(q(p_{\Pi(B)},\Pi(B)),p^*_{\Pi(B)}(A))=$$
$$q(\delta_{\Pi(B)}\circ q(p_{\Pi(B)},\Pi(B)), p^*_{\Pi(B)}(A))=q(Id,p^*_{\Pi(B)}(A))=Id.$$
\end{proof}
This completes our construction for Problem \ref{2015.03.13.prob1}.

\subsection{More on the C-systems of the form $CC({\cal C},p)$}

Let us start by considering a general (pre-)category $\cal C$. Let $p:\wt{U}\sr U$ be a morphism in $\cal C$. Recall from \cite{Cfromauniverse} that a universe structure on $p$ is a choice of pull-back squares of the form
$$
\begin{CD}
(X;F) @>Q(F)>> \wt{U}\\
@Vp_{X,F} VV @VV p V\\
X @>F>> U
\end{CD}
$$
for all $X$ and all morphisms $F:X\sr U$. A universe in $\cal C$ is a morphism with a universe structure on it and a universe category is a category with a universe and a choice of a final object $pt$. 

We may use the notation $(X;F_1,\dots,F_n)$ for $(\dots (X;F_1);\dots F_n)$. 

For $f:W\sr X$ and $g:W\sr \wt{U}$ we will denote by $f*g$ the unique morphism such that 
$$(f*g)\circ p_{X,F}=f$$
$$(f*g)\circ Q(F)=g$$
For $X'\stackrel{f}{\sr}X\stackrel{F}{\sr}U$ we let $Q(f,F)$ denote the morphism 
$$(p_{X',f\circ F}\circ f)*Q(f\circ F):(X';f\circ F)\sr (X;F)$$

The construction of the C-system $CC({\cal C},p)$ presented in \cite{Cfromauniverse} can be described as follows. One defines first, by induction on $n$, pairs $(Ob_n, int_n:Ob_n\sr {\cal C})$ where $Ob_n=Ob_n({\cal C},p)$ is a set and $int_n$ is a function from $Ob_n$ to objects of $\cal C$ as follows:
\begin{enumerate}
\item $Ob_0$ is the standard one point set $unit$ whose element we denote by $tt$. The function $int_0$ maps $tt$ to the final object $pt$ of the universe category structure on $\cal C$,
\item $Ob_{n+1}=\amalg_{A\in Ob_n}Hom(int(A),U)$ and $int_{n+1}(A,F)=(int(A);F)$.
\end{enumerate}
We then define $Ob(CC({\cal C},p))$ as $\amalg_{n\ge 0}Ob_n$ such that elements of $Ob(CC({\cal C},p))$ are pairs $\Gamma=(n,A)$ where $A\in Ob_n({\cal C},p)$. We define the function $int:Ob(CC({\cal C},p))\sr {\cal C}$ as the sum of functions $int_n$. 

The morphisms in $CC({\cal C},p)$ are defined by
$$Mor_{CC({\cal C},p)}=\amalg_{\Gamma,\Gamma'\in Ob(CC)}Hom_{\cal C}(int(\Gamma),int(\Gamma'))$$
and the function $int$ on morphisms maps a triple $(\Gamma,(\Gamma',a))$ to $a$.
Note that the subset in $Mor$ that consists of $f$ such that $dom(f)=\Gamma$ and $codom(f)=\Gamma'$ is not equal to the set $Hom_{\cal C}(int(\Gamma),int(\Gamma'))$ but instead to the set of triples of the form $f=(\Gamma,(\Gamma',a))$ where $a\in Hom_{\cal C}(int(\Gamma),int(\Gamma'))$. 
\begin{problem}
\llabel{2015.04.30.prob1}
To construct, for all $\Gamma\in Ob(CC({\cal C},p))$ bijections
$$u_{1,\Gamma}:Ob_1(\Gamma)\sr Hom_{\cal C}(int(\Gamma),U)$$
$$\wt{u}_{1,\Gamma}:\wt{Ob}_1(\Gamma)\sr Hom_{\cal C}(int(\Gamma),\wt{U})$$
such that:
%###
\begin{enumerate}
\item for $(n,A)\in Ob(CC({\cal C},p))$ one has 
\begin{eq}
\llabel{2015.04.30.eq3a}
u_1(n+1,(A,F))=F
\end{eq}
and if $l(\Gamma')=n>0$ then 
\begin{eq}
\llabel{2015.05.02.eq1a}
int(\Gamma')=(int(ft(\Gamma')); u_{1}(\Gamma'))
\end{eq}
\item for $o\in \wt{Ob}_1(\Gamma)$ one has 
\begin{eq}
\llabel{2015.04.30.eq4a}
\wt{u}_1(o)=int(o)\circ Q(u_1(\partial(o)))
\end{eq}
and
\begin{eq}
\llabel{2015.05.04.eq1a}
int(o)=Id_{ft(\partial(o))}*\wt{u}_1(o)
\end{eq}
\item $u_1$ and $\wt{u}_1$ are natural in $\Gamma$ i.e. for any $f:\Gamma'\sr \Gamma$ one has:
\begin{eq}
\llabel{2015.04.30.eq1a}
u_1(f^*(T))=f\circ u_1(T)
\end{eq}
\begin{eq}
\llabel{2015.04.30.eq2a}
\wt{u}_1(f^*(o))=f\circ \wt{u}_1(o)
\end{eq}
\item one has
\begin{eq}
\llabel{2015.05.02.eq5a}
u_1(\partial(o))=\wt{u}_1(o)\circ p
\end{eq}
\end{enumerate}
\end{problem}
\begin{remark}\rm
\llabel{2015.07.29.rem1}
The families of sets $Ob_1(\Gamma)$ and $\wt{Ob}_1(\Gamma)$ together with the families of functions $f^*$ satisfy the axioms of presheaves. To construct families of functions $u_{1,\Gamma}$ and $\wt{u}_{1,\Gamma}$ satisfying conditions (3) of the problem is the same as to construct presheaf isomorphisms $Ob_1\sr int_*(Yo(U))$ and $\wt{Ob}_1\sr int_*(Yo(\wt{U}))$ where $Yo$ is the Yoneda embedding and 
$$int_*:PreShv({\cal C})\sr PreShv(CC)$$
is the functor given by $int_*(F)(X)=F(int(X))$. The fourth condition asserts that the square
$$
\begin{CD}
\wt{Ob}_1 @>\wt{u}_1>> int_*(Yo(\wt{U}))\\
@V\partial VV @VV Yo(p)V\\
Ob_1 @>u_1>> int_*(Yo(U))
\end{CD}
$$
commutes.  
\end{remark}
\begin{construction}\rm
\llabel{2015.05.02.constr1}
For $\Gamma=(n,A)$ where $A\in Ob_n({\cal C},p)$, an element $\Gamma'$ in $Ob_1(\Gamma)$ is a triple $(n+1,(A,F))$ where $F:int(A)\sr U$. Mapping such a triple to $F$ we obtain a bijection
$$u_{1,\Gamma}:Ob_1(\Gamma)\sr Hom_{\cal C}(int(\Gamma),U)$$
For $\Gamma'$ such that $l(\Gamma')=n+1>0$ we have $\Gamma'=(n+1,(A,F))$ where $ft(\Gamma')=(n,A)$ and 
$$int(\Gamma')=int(A,F)=(int(A);F)=(int(ft(\Gamma)),u_1(\Gamma'))$$

An element $o$ in $\wt{Ob}_1(\Gamma)$ is a triple $(\Gamma,(\Gamma',s))$ where
$$s=int(o)\in Hom_{\cal C}(int(\Gamma),int(\Gamma')),$$
$\Gamma'=\partial(o)$ is an object such that $ft(\Gamma')=\Gamma$, $s\circ int(p_{\Gamma'})=Id_{int(\Gamma)}$ and $l(\Gamma')=n+1>0$. 

Define the function $\wt{u}_{1,\Gamma}$ by the formula
$$\wt{u}_{1,\Gamma}(o)=int(o)\circ Q(u_{1,\Gamma}(\partial(o)))$$
If $\Gamma=(n,A)$ then 
$$\partial(o)=(n+1,(A,F))$$
where $F=u_{1,\Gamma}(\partial(o)):int(A)\sr U$ and we have a canonical square
\begin{eq}\llabel{2015.05.04.eq2}
\begin{CD}
int(\partial(o)) @>Q(u_{1,\Gamma}(\partial(o)))>> \wt{U}\\
@Vint(p_{\partial(o)})VV @VVpV\\
int(\Gamma) @>u_{1,\Gamma}(\partial(o))>> U
\end{CD}
\end{eq}
which shows that the composition $s\circ Q(u_{1,\Gamma}(\Gamma'))$ is defined and is a morphism $int(\Gamma)\sr \wt{U}$. 

For the formula  (\ref{2015.05.04.eq1a}) we have
$$int(o)=Id_{ft(\partial(o))}*(int(o)\circ Q(u_1(\partial(o))))$$
because a morphism to a fiber product equals to the product of its composition with the projections and therefore
$$int(o)=Id_{ft(\partial(o))}*\wt{u}_1(o)$$
by definition of $\wt{u}_1(o)$. 

To show $\wt{u}_{1,\Gamma}$ it is a bijection let us construct an inverse. For $f:int(\Gamma)\sr \wt{U}$ let 
$$\wt{u}^!_{1,\Gamma}(f)=(\Gamma, ((n+1, (A,f\circ p)), s_f))$$
where $s_f:int(\Gamma)\sr (A;f\circ p)$ is the unique section of $p_{A,f\circ p}$ such that $s_f\circ Q(f\circ p)=f$. 

We have 
$$\wt{u}^!(\wt{u}(\Gamma,(\Gamma',s)))=\wt{u}^!(s\circ Q(u(\Gamma')))=$$$$(\Gamma,((n+1,(A,s\circ Q(u(\Gamma'))\circ p)),s'))=(\Gamma,((n+1,(A,u(\Gamma')),s')))$$
where $s'=s_{s\circ Q(u(\Gamma'))}=s$ which proves that $\wt{u}^!$ is inverse to $\wt{u}$ from one side. In the opposite direction we have
$$\wt{u}(\wt{u}^!(f))=\wt{u}(\Gamma,((n+1,(A,f\circ p)),s_f))=s_f\circ Q(u((n+1,(A,f\circ p))))=s_f\circ Q(f\circ p)=f$$

The proofs of the naturality of $u_1$ and $\wt{u}_1$ with respect to morphisms in $\Gamma$ follow easily from the definition of the canonical squares in $CC({\cal C},p)$. 

Formula (\ref{2015.05.02.eq5a}) is a corollary of the commutativity of the square (\ref{2015.05.04.eq2}). 
\end{construction}

We will now construct bijections $u_{2,\Gamma}$ and $\wt{u}_{2,\Gamma}$ similar to the bijections $u_{1,\Gamma}$ and $\wt{u}_{1,\Gamma}$ but having as sources $Ob_2(\Gamma)$ and $\wt{Ob}_2(\Gamma)$. 

For any $V\in{\cal C}$ we define functor data $D_p(-,V)$ given on objects by
$$D_p(X,V) := \amalg_{F:X\sr U}Hom((X;F),V)$$
and on morphisms by
$$D_p(f,V):(F_1,F_2)\mapsto (f\circ F_1, Q(f,F_1)\circ F_2)$$
The sets $D_p(X,V)$ are also functorial in $V$ according to the formula
$$D_p(X,g)(F_1,F_2)=(F_1,F_2\circ g)$$
and for $f:X\sr X'$, $g: V\sr V'$ we have
$$D_p(f,V)\circ D_p(X,g)=D_p(X',g)\circ D_p(f,V')$$
\begin{problem}
\llabel{2015.05.02.prob2}
To construct for all $\Gamma\in Ob(CC({\cal C},p))$ bijections 
$$u_{2,\Gamma}:Ob_2(\Gamma)\sr D_p(int(\Gamma),U)$$
$$\wt{u}_{2,\Gamma}:\wt{Ob}_2(\Gamma)\sr D_p(int(\Gamma),\wt{U})$$
such that:
\begin{enumerate}
\item $u_{2,\Gamma}(T)=(u_{1,\Gamma}(ft(T)),u_{1,ft(T)}(T))$
\item $\wt{u}_{2,\Gamma}(o)=(u_{1,\Gamma}(ft(\partial(o))),\wt{u}_{1,ft(\partial(o))}(o))$
\item for $f:\Gamma'\sr \Gamma$ one has 
$$u_2(f^*(T))=D_p(f,U)(u_2(T))$$
$$\wt{u}_2(f^*(o))=D_p(f,\wt{U})(\wt{u}_2(o))$$
\item $u_2(\partial(o))=D_p(int(\Gamma),p)(\wt{u}_2(o))$
\end{enumerate}
\end{problem}
\begin{construction}\rm
\llabel{2015.05.02.constr2}
By (\ref{2015.05.02.eq1a}) we have 
$$int(ft(T))=(int(\Gamma);u_{1,\Gamma}(ft(T)))$$
and therefore $(u_{1,\Gamma}(ft(T)),u_{1,ft(T)}(T))$ is a well defined element of $D_p(int(\Gamma),U)$ for all $T\in Ob_2(\Gamma)$. Let us define the function $u_{2,\Gamma}$ by the formula
$$u_{2,\Gamma}(T)=(u_{1,\Gamma}(ft(T)),u_{1,ft(T)}(T))$$
We can write this function as a composition of the bijection 
$$Ob_2(\Gamma)\sr \amalg_{\Gamma'\in Ob_1(\Gamma)}Ob_1(\Gamma')$$
that sends $T$ to $(ft(T),T)$ with the function
$$\amalg_{\Gamma'\in Ob_1(\Gamma)}Ob_1(\Gamma')\sr \amalg_{F\in Hom(int(\Gamma),U)}Hom((int(\Gamma);F),U)$$
that is the total function of the function $u_{1,\Gamma}$ and the family of functions $u_{1,\Gamma'}$ given for all $\Gamma'\in Ob_1(\Gamma)$. Since $u_{1,\Gamma}$ is a bijection and for each $\Gamma'$, $u_{1,\Gamma'}$ is a bijection, the total function is a bijection. 

Similarly, $(u_{1,\Gamma}(ft(\partial(o))),\wt{u}_{1,ft(\partial(o))}(o))$ is a well defined element of $D_p(int(\Gamma),\wt{U})$ since 
$$int(ft(\partial(o)))=(int(\Gamma);u_{1,\Gamma}(ft(\partial(o)))).$$ 

Define the function $\wt{u}_{2,\Gamma}$ be the formula
$$\wt{u}_{2,\Gamma}(o)=(u_{1,\Gamma}(ft(\partial(o))),\wt{u}_{1,ft(\partial(o))}(o))$$
We can write this function as the composition of the bijection
$$\wt{Ob}_2(\Gamma)\sr \amalg_{\Gamma'\in Ob_1(\Gamma)}\wt{Ob}_1(\Gamma')$$
that sends $o$ to $(ft(\partial(o)),o)$ with the function
$$\amalg_{\Gamma'\in Ob_1(\Gamma)}\wt{Ob}_1(\Gamma')\sr \amalg_{F\in Hom(int(\Gamma),U)}Hom((int(\Gamma);F),\wt{U})$$
that is the total function of the function $u_{1,\Gamma}$ and the family of functions $\wt{u}_{1,\Gamma'}$ given for all $\Gamma'\in Ob_1(\Gamma)$. Since $u_{1,\Gamma}$ is a bijection and for each $\Gamma'$, $\wt{u}_{1,\Gamma'}$ is a bijection, the total function is a bijection. 

The verification of the third and the fourth conditions of the problem are easy from the definition of $u_2$ and $\wt{u}_2$.
\end{construction}
\begin{remark}\rm
\llabel{2015.07.29.rem2}
The families of sets $D_p(X,V)$ together with the families of functions $D_p(f,V)$ and $D_p(X,g)$ define, as one can easily prove from definitions, a functor from ${\cal C}^{op}\times{\cal C}$ to $Sets$ or, if viewed as families $V\mapsto D_p(-,V)$, a functor
$$Yo_2:{\cal C}\sr PreShv({\cal C})$$
If $Yo_1=Yo$ is the Yoneda embedding then we can see $u_i$ for $i=1,2$ as isomorphisms
$$Ob_i\sr int_*(Yo_i(U))$$
and $\wt{u}_i$ as isomorphisms 
$$\wt{Ob}_i\sr int_*(Yo_i(\wt{U}))$$
These isomorphisms generalize easily to all $i>0$ if one defines, inductively,
$$Yo_{n+1}(V)(X)=\amalg_{F:X\sr U}Yo_n(V)((X;F))$$
Moreover, if we define $Hom_{n}(X,Y)$ as $Yo_n(Y)(X)$ then there are composition functions
$$Hom_n(X,Y)\times Hom_m(Y,Z)\sr Hom_{n+m}(X,Z)$$
that are likely to satisfy the unity and associativity axioms such that one obtains, from any universe category $({\cal C},p)$, a new category $({\cal C},p)_*$ with the same collection of objects and morphisms between two objects given by
$$Hom_{({\cal C},p)_*}(X,Y)=\amalg_{n\ge 1}Hom_m(X,Y)$$
In this paper we will not need $Yo_n$ for $n>2$ and we defer the study of this structure until the future papers.
\end{remark}

When $\cal C$ is a locally cartesian closed category (see appendix), the functors $D_p(-,V)$ become representable providing us with a way to describe operations such as $\Pi$ and $\lambda$ on $CC({\cal C},p)$ in terms of morphisms between objects in $\cal C$. 

For a morphism $p:\wt{U}\sr U$ in a locally cartesian closed category and an object $V$ of this category let 
$$I_p(V):=\uu{Hom}_U((\wt{U},p),(U\times V,pr_1))$$
and let 
$$prI_p(V)=p\triangle pr_1:I_p(V)\sr U$$
be the morphism that defines $I_p(V)$ as an object over $U$.

Note that $I_p$ depends on the choice of a locally cartesian closed structure on $\cal C$. On the other hand, the construction of the functors $D_p(X,V)$ requires a universe structure on $p$ but do not require a locally cartesian closed structure on $\cal C$. 

The computations below are required in order to establish the connections between the constructions that use the locally cartesian closed structure and the constructions that use universe structures. 

Let $p:\wt{U}\sr U$ be a universe and $V$ an object of $\cal C$. We assume that $\cal C$ is equipped with a locally cartesian closed structure. For $F:X\sr U$ there is a unique morphism
$$\iota_F:(X;F)\sr (X,f)\times_U(\wt{U},p)$$
such that $\iota_F\circ pr_1=p_{X,F}$ and $\iota_F\circ pr_2=Q(F)$ which is a particular case of the morphisms $\iota$, $\iota'$ of Lemma \ref{2015.04.16.l1}. 

The evaluation morphism in the case of $I_p(V)$ is of the form 
$$evI_p: (I_p(V),prI_p(V))\times_U(U\times V, pr_1)\sr U\times V$$
Define a morphism
$$st_p(V):(I_p(V);prI_p(V))\sr V$$
as the composition:
$$st_p(V):=\iota_{prI_p(V)}\circ evI_p(V)\circ pr_2$$
We will need to use some properties of these morphisms.
\begin{lemma}
\llabel{2015.04.14.l2a}
Let $f:V\sr V'$ be a morphism, then one has
$$Q(I_p(f),prI_p(V'))\circ st_p(V')=st_p(V)\circ f$$
\end{lemma}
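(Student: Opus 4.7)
The equation asserts that $st_p$ is natural in $V$. My plan is to unpack the definition $st_p(V)=\iota_{prI_p(V)}\circ evI_p(V)\circ pr_2$ and chain together three naturality facts. First, the identity $pr_2\circ f=(Id_U\times f)\circ pr_2$ (naturality of the product projection) moves $f$ past $pr_2$. Second, $I_p(f)$ is by definition the morphism induced by $Id_U\times f:U\times V\sr U\times V'$ via the functoriality of $\uu{Hom}_U((\wt{U},p),-)$ in the slice $\mathcal{C}/U$, so naturality of internal-hom evaluation gives
$$evI_p(V)\circ(Id_U\times f)=(I_p(f)\times_U Id_{\wt{U}})\circ evI_p(V').$$
Chaining these two rewritings yields
$$st_p(V)\circ f=\iota_{prI_p(V)}\circ(I_p(f)\times_U Id_{\wt{U}})\circ evI_p(V')\circ pr_2.$$

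The main step, and the heart of the proof, is to verify the commutative square
$$
\begin{CD}
(I_p(V);prI_p(V)) @>Q(I_p(f),prI_p(V'))>> (I_p(V');prI_p(V'))\\
@V\iota_{prI_p(V)}VV @VV\iota_{prI_p(V')}V\\
(I_p(V),prI_p(V))\times_U(\wt{U},p) @>I_p(f)\times_U Id>> (I_p(V'),prI_p(V'))\times_U(\wt{U},p)
\end{CD}
$$
expressing compatibility of the universe-given pullback with the honest fibered product. I would establish this by composing both paths with each of the two projections out of the bottom-right fibered product: against $pr_1$ both give $p_{I_p(V),prI_p(V)}\circ I_p(f)$, using the defining identity $Q(f,F)=(p_{X',f\circ F}\circ f)*Q(f\circ F)$ together with $I_p(f)\circ prI_p(V')=prI_p(V)$; against $pr_2$ both give $Q(prI_p(V))$, using $Q(f,F)\circ Q(F)=Q(f\circ F)$.

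Once the square is in hand, substituting it into our expression gives
$$st_p(V)\circ f=Q(I_p(f),prI_p(V'))\circ\iota_{prI_p(V')}\circ evI_p(V')\circ pr_2=Q(I_p(f),prI_p(V'))\circ st_p(V'),$$
which is the claim. The main obstacle is the bookkeeping across the square — keeping the various decorations on the $Q$, $\iota$, and projection maps aligned — rather than anything conceptual; once one accepts that under the canonical identification $\iota$ between the universe-pullback and the honest fibered product the transport morphism $Q(I_p(f),prI_p(V'))$ corresponds to the fibered-product functoriality $I_p(f)\times_U Id_{\wt{U}}$, the naturality of $st_p$ reduces to the two standard naturalities above.
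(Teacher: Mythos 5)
Your proof is correct and follows essentially the same route as the paper's: both decompose the claim into the commutativity of the same three squares (the $\iota$-square, the naturality of $ev$ in the second variable, and the naturality of $pr_2$). The only cosmetic difference is that the paper disposes of the left-hand square by citing its appendix Lemma \ref{2015.04.16.l1}, whereas you verify it directly by composing with the two projections of the fibered product — which is the same computation carried out in place.
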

\begin{proof}
Let $pr=prI_p(V)$, $pr'=prI_{p}(V')$, $\iota=\iota_{pr}$, $\iota'=\iota_{pr'}$, $ev=evI_p(V)$ and $ev'=evI_p(V')$. Then we have to verify that the outer square of the following diagram commutes:
$$
\begin{CD}
(I_p(V);pr) @>\iota>> (I_p(V),pr)\times_U(\wt{U},p) @>ev>> U\times V @>pr_2 >> V\\
@VQ(I_p(f),pr') VV @V I_p(f)\times Id_{\wt{U}} VV@V Id_U\times f VV @VV f V\\
(I_p(V');pr') @>\iota'>> (I_p(V'),pr')\times_U(\wt{U},p) @>ev'>> U\times V' @>pr_2 >> V'
\end{CD}
$$
The commutativity of the left square is a particular case of Lemma \ref{2015.04.16.l1}. The commutativity of the right square is an immediate corollary of the definition of $Id_U\times f$.  The commutativity of the middle square is a particular case of the axiom of locally cartesian closed structure that  says that morphisms $ev^{X}_{Y}$ are natural in $Y$. 
\end{proof}

\begin{problem}
\llabel{2015.03.29.prob1}
Let $({\cal C},p,pt)$ be a locally cartesian closed universe category. To construct, for all $X,V\in{\cal C}$, bijections
$$\eta_{X,V}:D_p(X,V)\sr Hom(X,I_p(V))$$
that are natural in $X$ and $V$, i.e., such that for any $d\in D_p(X,V)$ one has
\begin{enumerate}
\item for all $f:V\sr V'$ one has $\eta(d)\circ I_p(f)=\eta(D_p(X,f)(d))$,
\item for all $f:X'\sr X$ one has $f\circ \eta(d)=\eta(D_p(f,V)(d))$.
\end{enumerate}
\end{problem}
\begin{construction}\rm
\llabel{2015.03.29.constr1}
We will construct bijections 
$$\eta^{!}_{X,V}:Hom(X,I_p(V))\sr D_p(X,V)$$
such that for any $g:X\sr I_p(V)$ one has:
\begin{enumerate}
\item for all $f:V\sr V'$ one has $D_p(X,f)(\eta^!(g))=\eta^!(g\circ I_p(f))$,
\item for all $f:X'\sr X$ one has $D_p(f,V)(\eta^!(g))=\eta^!(f\circ g)$.
\end{enumerate}
and then define $\eta_{X,V}$ as the inverse to $\eta^!_{X,V}$. 

For $g:X\sr I_p(V)$ we set
$$\eta^{!}_{X,V}(g):=(g\circ prI_p(V), Q(g,prI_p(V))\circ st_p(V))$$
To see that this is a bijection observe first that it equals to the composition
$$Hom(X,I_p(V))\sr \amalg_{F:X\sr U}Hom_U((X,F),(I_p(V),prI_p(V)))\sr \amalg_{F:X\sr U}Hom((X;F),V)$$
where the first map is of the form $g\mapsto (g\circ prI_p(V),g)$ and the second is the sum over all $F:X\sr U$ of maps $g\mapsto Q(g,prI_p(V))\circ st_p(V)$. The first of these two maps is a bijection. It remains to show that the second one is a bijection for every $F$.

By definition of the $\uu{Hom}$ structure we know that for each $F$ the map
$$Hom_U((X,F),(I_{p}(V),prI_p(V)))\sr Hom_U(((X,F)\times_U(\wt{U},p),-),(U\times V,pr_1))$$
given by $g\mapsto (g\times Id_{\wt{U}})\circ evI_p(V)$ is a bijection. We also know that the map
$$Hom_U(((X,F)\times_U(\wt{U},p),F\dd p),(U\times V,pr_1))\sr Hom((X,F)\times_U(\wt{U},p), V)$$
is a bijection. Since $\iota_F$ is an isomorphism the composition with it is a bijection. Now we have two maps
$$Hom_U((X,F),(I_{p}(V),prI_p(V)))\sr Hom((X;F),V)$$
given by $g\mapsto \iota_F\circ (g\times Id_{\wt{U}})\circ evI_p(V)\circ p_V$ and $g\mapsto Q(g,prI_p(V))\circ st_p(V)$ of which the first one is the bijection. It remains to show that these maps are equal. For this it is sufficient to show that 
$$Q(g,prI_p(V))\circ \iota_{prI_p(V)}=\iota_F\circ(g\times Id_{\wt{U}})$$
which follows easily from computing compositions with the projections $pr_1$ to $I_p(V)$ and $pr_2$ to $\wt{U}$.

We now have to check the behavior of $\eta^!$ with respect to morphisms in $X$ and $V$.

Let $pr=prI_p(V)$ and $pr'=prI_p(V')$. For $f:V'\sr V$ and $f:X\sr I_p(V)$ we have
$$D_p(X,f)(\eta^!(g))=D_p(X,f)(g\circ pr, Q(g,pr)\circ st_p(V))=(g\circ pr, Q(g,pr)\circ st_p(V)\circ f)$$
and
$$\eta^!(g\circ I_p(f))=(g\circ I_p(f)\circ pr', Q(g\circ I_p(f),pr')\circ st_p(V'))$$
We have $pr=I_p(f)\circ pr'$ because $I_p(f)$ is a morphism over $U$. It remains to check that 
$$Q(g,pr)\circ st_p(V)\circ f=Q(g\circ I_p(f),pr')\circ st_p(V')$$
By \cite[Lemma 2.5]{Cfromauniverse} we have
$$Q(g\circ I_p(f),pr')=Q(g,pr)\circ Q(I_p(f),pr')$$
and the remaining equality
$$Q(g,pr)\circ st_p(V)\circ f=Q(g,pr)\circ Q(I_p(f),pr')\circ st_p(V')$$
follows from Lemma \ref{2015.04.14.l2a}.

Consider now $f:X'\sr X$. Then 
$$D_p(f,V)(\eta^!(g))=D_p(f,V)(g\circ pr, Q(g,pr)\circ st_p(V))=(f\circ g\circ pr,  Q(f, g\circ pr)\circ Q(g,pr)\circ st_p(V))$$
$$\eta^!(f\circ g)=(f\circ g\circ pr, Q(f\circ g, pr)\circ st_p(V))$$
and the required equality follows from \cite[Lemma 2.5]{Cfromauniverse}.  
\end{construction}
\begin{problem}
\llabel{2015.03.17.prob3}
For a locally cartesian closed closed $\cal C$ and a universe $p:\wt{U}\sr U$ in $\cal C$ to construct for any $\Gamma\in Ob(CC({\cal C},p))$ bijections
$$\mu_{2,\Gamma}: Ob_2(\Gamma)\sr Hom_{\cal C}(int(\Gamma),I_p(U))$$
and
$$\wt{\mu}_{2,\Gamma}: \wt{Ob}_2(\Gamma)\sr Hom_{\cal C}(int(\Gamma),I_p(\wt{U}))$$
that are natural in $\Gamma$ and such that with respect to these bijections $\partial$ corresponds to composition with $I_p(p)$.
\end{problem}
\begin{construction}
\llabel{2015.03.17.constr2}\rm
Compose bijections $u_2$ and $\wt{u}_2$ with the bijection $\eta$ of Construction \ref{2015.03.29.constr1} in the case $V=U$ and $V=\wt{U}$ respectively. 
\end{construction}
\begin{remark}
\rm\llabel{2015.03.29.rem2}
The previous constructions related to $Ob_2$ and $\wt{Ob}_2$ can be easily generalized to $Ob_n$ and $\wt{Ob}_n$  for all $n>0$. For example there are natural bijections
$$\mu_{n+1}:Ob_{n+1}(\Gamma)\sr Hom(int(\Gamma), I_p^n(U))$$
$$\wt{\mu}_{n+1}:\wt{Ob}_{n+1}(\Gamma)\sr Hom(int(\Gamma), I_p^n(\wt{U}))$$
where $I_p^n$ is the n-th iteration of the functor $I_p$ and $\mu_1=u_1$ and $\wt{\mu}_1=\wt{u}_1$. More generally, the functors $Yo_n(V)$ of Remark \ref{2015.07.29.rem2} in the case of a locally cartesian closed universe category $({\cal C},p)$ are representable by objects $I_p^n(V)$. 
\end{remark}

\subsection{$(\Pi,\lambda)$-structures on the C-systems $CC({\cal C},p)$}
We will show now how to construct $(\Pi,\lambda)$-structures on C-systems of the form $CC({\cal C},p)$ for locally cartesian closed (pre-)categories\footnote{For the discussion of the difference between a category and a pre-category see the introduction to \cite{Csubsystems} and \cite{RezkCompletion}.} $\cal C$.

\begin{definition}
\llabel{2015.03.29.def1}
Let $\cal C$ be a locally cartesian closed category, $pt$ be a final object in $\cal C$ and $p:\wt{U}\sr U$ a universe. A $\Pi$-structure on $p$ is a pair of morphisms 
$$\wt{P}:I_p(\wt{U}) \sr \wt{U}$$
$$P:I_p(U) \sr U$$
such that the square
\begin{eq}
\llabel{2009.prod.square}
\begin{CD}
I_p(\wt{U}) @>\wt{P}>> \wt{U}\\
@VV I_p(p) V @VV p V\\
I_p(U) @>P>> U
\end{CD}
\end{eq}
is a pull-back square.
\end{definition}
\begin{problem}
\llabel{2015.03.17.prob0}
Let $\cal C$ be a locally cartesian closed category, $pt$ be a final object in $\cal C$ and $p:\wt{U}\sr U$ a universe. Let $(\wt{P},P)$ be a $\Pi$-structure on $p$. To construct a $(\Pi,\lambda)$-structure on $CC({\cal C},p)$.
\end{problem}
\begin{construction}
\llabel{2015.03.17.constr3}\rm
Let $\Gamma\in Ob(CC({\cal C},p))$. For $T\in Ob_2(\Gamma)$ set
$$\Pi_{\Gamma}(T)=u_1^{-1}(u(T)\circ P)$$
and for $s\in \wt{Ob}_2(\Gamma)$ set
$$\lambda_{\Gamma}(s)=\wt{u}_1^{-1}(\wt{u}_2(s)\circ \wt{P})$$
These gives us maps
$$\Pi_{\Gamma}:Ob_2(\Gamma)\sr Ob_1(\Gamma)$$
$$\lambda_{\Gamma}:\wt{Ob}_2(\Gamma)\sr \wt{Ob}_1(\Gamma)$$
The naturality of $u$ and $\wt{u}_2$ relative to morphisms $f:\Gamma'\sr \Gamma$ implies that these maps are natural with respect to such morphisms i.e. the squares (\ref{2015.03.09.sq1}) and (\ref{2015.03.09.sq2}) of Definition \ref{2015.03.09.def1} commute. One also verifies easily that $\partial(\lambda_{\Gamma}(s))=\Pi_{\Gamma}(\partial(s))$.

To verify that this pre-$(\Pi,\lambda)$-structure satisfies the Definition \ref{2015.03.09.def2} of $(\Pi,\lambda)$-structure one verifies that the bijections $\wt{u}_2$, $u_2$, $\wt{u}_1$ and $u_1$ define an isomorphism from the square (\ref{2015.03.09.eq1}) to the square obtained from (\ref{2009.prod.square}) by taking Hom-sets $Hom(int(\Gamma),-)$. Since the later square is pull-back and a square isomorphic to a pull-back square is a pull-back square the square (\ref{2015.03.09.eq1}) is a pull-back square and $(\Pi,\lambda)$ is a $(\Pi,\lambda)$-structure. 
\end{construction}

\comment{
\begin{definition}
\llabel{2009.10.27.def1}
Let $\cal C$ be an lcc category and let $p_i:\wt{U}_i\sr U_i$, $i=1,2,3$ be three morphisms in $\cal C$. A $\Pi$-structure on $(p_1,p_2,p_3)$ is a Cartesian square of the form
\begin{eq}
\llabel{Pisq1}
\begin{CD}
\uu{Hom}_{U_1}(\wt{U}_1,U_1\times \wt{U_2}) @>\wt{P}>> \wt{U}_3\\
@Vp_2'VV @VVp_3V\\
\uu{Hom}_{U_1}(\wt{U}_1,U_1\times U_2) @>P>> U_3
\end{CD}
\end{eq}
such that $p_2'$ is the natural morphism defined by $p_2$. A $\Pi$-structure on $p:\wt{U}\sr U$ is a $\Pi$-structure on $(p,p,p)$.
\end{definition}
Let $\cal C$ be as above, $p:\wt{U}\sr U$ and let $(\wt{P},P)$ be a $\Pi$-structure on $(p,p,p)$. Let us construct a $(\Pi,\lambda)$-structure on the C-system $CC=CC({\cal C},p)$. 
}

\subsection{More on universe category functors I}

Let $({\cal C},p,pt)$ and $({\cal C},p',pt')$ be two universe (pre-)categories. Recall from \cite{Cfromauniverse} that a functor of universe categories from $({\cal C},p,pt)$ to  $({\cal C},p',pt')$  is a triple ${\bf\Phi}=(\Phi,\phi,\wt{\phi})$ where $\Phi$ is a functor ${\cal C}\sr {\cal C}'$ and $\phi:\Phi(U)\sr U'$, $\wt{\phi}:\Phi(\wt{U})\sr \wt{U}'$ are two morphisms such that $\Phi$ takes the final object to a final object, pull-back squares based on $p$ to pull-back squares and such that the square 
\begin{eq}
\llabel{2015.03.21.sq1}
\begin{CD}
\Phi(\wt{U}) @>\wt{\phi}>> \wt{U}'\\
@V\Phi(p)VV @VVp' V\\
\Phi(U) @>\phi>> U'
\end{CD}
\end{eq}
is a pull-back square.
 
For $X,V$ in $\cal C$ we have the functoriality map
$$\Phi:Hom(X,V)\sr Hom(\Phi(X),\Phi(V))$$
\begin{problem}
\llabel{2015.04.12.prob1}
For a universe category functor ${\bf\Phi}=(\Phi,\phi,\wt{\phi})$, to define, for all $X, V\in {\cal C}$, functions
$${\bf \Phi}^2:D_p(X,V)\sr D_{p'}(\Phi(X),\Phi(V))$$
\end{problem}
\begin{construction}\rm
\llabel{2015.04.12.constr1}
Let $(F_1:X\sr U, F_2:(X;F_1)\sr V)$ be an element in $D_p(X,V)$. 
Consider $(\Phi(X);\Phi(F_1)\circ \phi)$. Since the square (\ref{2015.03.21.sq1}) is a pull-back square there is a unique morphism $q$ such that $q\circ \wt{\phi}=Q(\Phi(F_1)\circ \phi)$ and $q\circ \Phi(p)=p_{\Phi(X),\Phi(F_1)\circ \phi}\circ \Phi(F_1)$ and then the left hand side square in the diagram
$$
\begin{CD}
(\Phi(X);\Phi(F_1)\circ \phi) @>q>> \Phi(\wt{U}) @>\wt{\phi}>> \wt{U}'\\
@VV p_{\Phi(X),\Phi(F_1)\circ \phi} V @V\Phi(p) VV @VV p' V\\
\Phi(X) @>\Phi(F_1)>> \Phi(U) @>\phi>> U'
\end{CD}
$$
is a pull-back square. Together with the fact that $\Phi$ takes pull-back squares based on $p$ to pull-back squares we obtain a unique morphism, which is an isomorphism,
$$\iota:(\Phi(X);\Phi(F_1)\circ \phi)\sr \Phi(X;F_1)$$
such that 
\begin{eq}\llabel{2015.04.08.eq1}
\iota\circ \Phi(p_{X,F_1})=p_{\Phi(X),\Phi(F_1)\circ \phi}
\end{eq}
\begin{eq}\llabel{2015.04.08.eq2}
\iota\circ \Phi(Q(F_1))\circ\wt{\phi}=Q(\Phi(F_1)\circ\phi)
\end{eq}
and we define:
$${\bf\Phi}^2(F_1,F_2):=(\Phi(F_1)\circ \phi, \iota\circ \Phi(F_2))$$
\end{construction}
We will need the following properties of the maps below.
\begin{lemma}
\llabel{2015.03.23.l1}
Let $\Phi$ be as above, $f:X'\sr X$ be a morphism and $V$ be an object of $\cal C$. Then the square 
$$
\begin{CD}
D_p(X,V) @>D_p(f,V)>> D_p(X',V)\\
@V{\bf \Phi}^2VV @V{\bf \Phi}^2 VV\\
D_{p'}(\Phi(X),\Phi(V)) @>D_{p'}(\Phi(f),\Phi(V))>> D_{p'}(\Phi(X'),\Phi(V))
\end{CD}
$$
commutes.
\end{lemma}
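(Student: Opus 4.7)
Fix $(F_1,F_2)\in D_p(X,V)$ and unfold both routes around the square. Along the top-then-right route, $D_p(f,V)(F_1,F_2)=(f\circ F_1,\,Q(f,F_1)\circ F_2)$, and then ${\bf\Phi}^2$ gives
$$(\Phi(f\circ F_1)\circ\phi,\ \iota'\circ \Phi(Q(f,F_1))\circ\Phi(F_2)),$$
where $\iota':(\Phi(X');\Phi(f\circ F_1)\circ\phi)\to\Phi(X';f\circ F_1)$ is the comparison isomorphism of Construction \ref{2015.04.12.constr1} for the datum $f\circ F_1:X'\sr U$. Along the left-then-bottom route, ${\bf\Phi}^2(F_1,F_2)=(\Phi(F_1)\circ\phi,\,\iota\circ\Phi(F_2))$ and then $D_{p'}(\Phi(f),\Phi(V))$ produces
$$(\Phi(f)\circ\Phi(F_1)\circ\phi,\ Q(\Phi(f),\Phi(F_1)\circ\phi)\circ\iota\circ\Phi(F_2)).$$
The first components already agree by functoriality of $\Phi$, so the assertion reduces to the single equality
$$\iota'\circ\Phi(Q(f,F_1))\ =\ Q(\Phi(f),\Phi(F_1)\circ\phi)\circ\iota \qquad(\ast)$$
of morphisms $(\Phi(X');\Phi(f\circ F_1)\circ\phi)\to\Phi(X;F_1)$.

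To establish $(\ast)$ I invoke the pull-back universal property of the target. Because $\Phi$ preserves pull-back squares based on $p$, the pair $(\Phi(p_{X,F_1}),\,\Phi(Q(F_1))\circ\wt\phi)$ exhibits $\Phi(X;F_1)$ as a pull-back of $p'$ along $\Phi(F_1)\circ\phi$. Hence it suffices to check $(\ast)$ after post-composition with $\Phi(p_{X,F_1})$ and, separately, with $\Phi(Q(F_1))\circ\wt\phi$. In each case the defining equations of $Q(f,F_1)$ (namely $Q(f,F_1)\circ p_{X,F_1}=p_{X',f\circ F_1}\circ f$ and $Q(f,F_1)\circ Q(F_1)=Q(f\circ F_1)$), the analogous equations for $Q(\Phi(f),\Phi(F_1)\circ\phi)$, the equations (\ref{2015.04.08.eq1}) and (\ref{2015.04.08.eq2}) characterising $\iota$ and $\iota'$, and the functoriality of $\Phi$ combine to reduce both sides to the same canonical morphism; for the $p_{X,F_1}$-projection this morphism is $p_{\Phi(X'),\Phi(f)\circ\Phi(F_1)\circ\phi}\circ\Phi(f)$, and for the $Q(F_1)$-projection it is $Q(\Phi(f)\circ\Phi(F_1)\circ\phi)$.

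The main obstacle is bookkeeping rather than substance: there are three layers of canonical maps in play -- $Q(-)$ in $\cal C$, $Q(-)$ in ${\cal C}'$, and the comparison isomorphisms $\iota,\iota'$ -- and tracking which defining equation is being used at each step is where care is needed. No genuinely new idea is required beyond the universal property of pull-backs and the pull-back preservation built into the definition of a universe category functor.
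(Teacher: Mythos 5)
Your argument is correct and follows essentially the same route as the paper: both unfold the two composites, observe that the first components agree by functoriality of $\Phi$, reduce the second components to the single identity $q'\circ\iota=\iota'\circ\Phi(q)$, and verify it via the universal property of the pull-back $\Phi(X;F_1)$ by checking the two projections. The only difference is that the paper leaves the final projection computations as ``a direct computation from definitions,'' whereas you carry them out explicitly (and correctly).
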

\begin{proof}
We have to show that for any $d\in D_p(X,V)$ one has 
$$D_{p'}(\Phi(f),\Phi(V))({\bf\Phi}^2(d))={\bf\Phi}^2(D_p(f,V)(d))$$
Let $d=(F_1,F_2)$. Then 
$$D_{p'}(\Phi(f),\Phi(V))({\bf\Phi}^2(d))=D_{p'}(\Phi(f),\Phi(V))(\Phi(F_1)\circ\phi,\iota\circ\Phi(F_2))=$$
$$(\Phi(f)\circ \Phi(F_1)\circ \phi, q'\circ \iota\circ \Phi(F_2))$$
and 
$${\bf\Phi}^2(D_p(f,V)(F_1,F_2))={\bf\Phi}^2(f\circ F_1,q\circ F_2)=$$
$$(\Phi(f\circ F_1)\circ\phi,\iota'\circ\Phi(q\circ F_2))$$
where
$$\iota:(\Phi(X);\Phi(F_1)\circ\phi)\sr \Phi(X;F_1)\spc\spc \iota':(\Phi(X');\Phi(f\circ F_1)\circ\phi)\sr \Phi(X';f\circ F_1)$$
$$q:(X';f\circ F_1)\sr (X;F_1)\spc\spc q':(\Phi(X');\Phi(f)\circ \Phi(F_1)\circ\phi)\sr (\Phi(X);\Phi(F_1)\circ\phi)$$
are the morphisms defined in Construction \ref{2015.04.12.constr1}. We have
$$\Phi(f)\circ \Phi(F_1)\circ \phi=\Phi(f\circ F_1)\circ\phi$$
and it remains to check that
$$q'\circ \iota\circ \Phi(F_2)=\iota'\circ\Phi(q\circ F_2)$$
or that $q'\circ \iota = \iota'\circ \Phi(q)$. The codomain of both morphisms is $\Phi(X;F_1)$ that by our assumption on $\Phi$ is a pull-back of $p'$ and $\Phi(F_1)\circ\phi$. Therefore it is sufficient to verify that the compositions of these two morphisms with the projections to $\wt{U}'$ and $\Phi(X)$ coincide.

This is done by a direct computation from definitions. 
\end{proof}
\begin{lemma}
\llabel{2015.04.10.l3}
Let ${\bf \Phi}$ be as above, $X$ an object of $\cal C$ and $f:V\sr V'$ a morphism. Then the square
$$
\begin{CD}
D_p(X,V) @>D_p(X,f)>> D_p(X,V')\\
@V{\bf\Phi}^2 VV @VV{\bf \Phi}^2V\\
D_{p'}(\Phi(X),\Phi(V)) @>D_p(\Phi(X),\Phi(f))>> D_{p'}(\Phi(X),\Phi(V'))
\end{CD}
$$
commutes.
\end{lemma}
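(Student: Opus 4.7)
The plan is to verify the square by evaluating both composites on an arbitrary element $d = (F_1, F_2) \in D_p(X,V)$, with $F_1 : X \sr U$ and $F_2 : (X;F_1) \sr V$, and observing that the two routes produce literally the same pair.

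First I would compute the upper-right path. By definition $D_p(X,f)(F_1,F_2) = (F_1, F_2 \circ f)$, and then Construction \ref{2015.04.12.constr1} gives
$$
{\bf\Phi}^2(F_1, F_2 \circ f) = \bigl(\Phi(F_1)\circ\phi,\ \iota \circ \Phi(F_2\circ f)\bigr),
$$
where $\iota : (\Phi(X);\Phi(F_1)\circ\phi) \sr \Phi(X;F_1)$ is the canonical isomorphism determined by $F_1$ alone (it does not involve $F_2$ or $V$). Next I would compute the lower-left path:
$$
{\bf\Phi}^2(F_1,F_2) = \bigl(\Phi(F_1)\circ\phi,\ \iota\circ\Phi(F_2)\bigr),
$$
and then $D_{p'}(\Phi(X),\Phi(f))$ postcomposes the second coordinate with $\Phi(f)$, yielding
$$
\bigl(\Phi(F_1)\circ\phi,\ \iota\circ\Phi(F_2)\circ\Phi(f)\bigr).
$$

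The two resulting pairs agree: the first coordinates are identical, and the second coordinates agree by functoriality of $\Phi$, namely $\Phi(F_2\circ f) = \Phi(F_2)\circ\Phi(f)$. The crucial structural point -- the one that makes this lemma easier than Lemma \ref{2015.03.23.l1} -- is that $\iota$ is constructed from $F_1$ and the universe data $(\phi,\wt\phi)$ only, so the \emph{same} isomorphism $\iota$ appears on both sides of the comparison and no further reconciliation is needed. I do not anticipate a real obstacle here; the only item that needs to be flagged carefully is that $\iota$ is independent of the target $V$, which is immediate from its defining equations (\ref{2015.04.08.eq1}) and (\ref{2015.04.08.eq2}) in Construction \ref{2015.04.12.constr1}.
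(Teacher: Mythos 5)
Your proposal is correct and follows essentially the same route as the paper's own proof: evaluate both composites on $(F_1,F_2)$, observe that both yield $(\Phi(F_1)\circ\phi,\ \iota\circ\Phi(F_2)\circ\Phi(f))$ by functoriality of $\Phi$, with the same $\iota$ appearing on both sides since it depends only on $F_1$. Your explicit remark that $\iota$ is independent of $V$ is a useful clarification of a point the paper leaves implicit, but it is not a different argument.
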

\begin{proof}
Let $d=(F_1,F_2)\in D_p(X,V)$. We have to show that
$${\bf\Phi}^2(D_p(X,f)(F_1,F_2))=D_p(\Phi(X),\Phi(f))({\bf \Phi}^2(F_1,F_2))$$
We have:
$${\bf \Phi}^2(D_p(X,f)(F_1,F_2))={\bf \Phi}^2((F_1,F_2\circ f))=(\Phi(F_1)\circ \phi, \iota\circ \Phi(F_2\circ f))=$$$$(\Phi(F_1)\circ \phi, \iota\circ \Phi(F_2)\circ \Phi(f))=D_p(\Phi(X),\Phi(f))({\bf \Phi}^2(F_1,F_2))$$
\end{proof}
Note that in the problem below no assumption is made about the compatibility of $\Phi$ with the locally cartesian closed structures on $\cal C$ and ${\cal C}'$. 
\begin{problem}
\llabel{2015.03.21.prob1}
Assume that $\cal C$ and ${\cal C}'$ are locally cartesian closed universe categories. For ${\bf\Phi}$ as above and $V\in{\cal C}$ to construct  a morphism
$$\chi_{\bf\Phi}(V):\Phi(I_p(V))\sr I_{p'}(\Phi(V))$$
\end{problem}
\begin{construction}\rm
\llabel{2015.03.21.constr1}
Let 
$$\eta:D_p(X,V)\sr Hom(X,I_p(V))$$
$$\eta':D_{p'}(X',V')\sr Hom(X',I_{p'}(V'))$$
be bijections from Construction \ref{2015.03.29.constr1}. We define:
$$\chi_{\bf\Phi}(V):=\eta'({\bf\Phi}^2(\eta^{!}(Id_{I_p(V)})))$$
for $X=I_p(V)$ and $X'=\Phi(I_p(V))$.
\end{construction}
Let us show that $\chi_{\bf\Phi}$ are natural in $V$.
\begin{lemma}
\llabel{2015.04.10.l4}
For ${\bf\Phi}$ as above let $f:V_1\sr V_2$ be a morphism. Then the square
$$
\begin{CD}
\Phi(I_p(V_1)) @>\chi(V_1)>> I_{p'}(\Phi(V_1))\\
@V\Phi(I_p(f)) VV @VV I_{p'}(\Phi(f)) V\\
\Phi(I_p(V_2)) @>\chi(V_2)>> I_{p'}(\Phi(V_2))
\end{CD}
$$
commutes.
\end{lemma}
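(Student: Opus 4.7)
The plan is to apply the bijection $(\eta')^!$ to both sides of the equation and then reduce both expressions to a common term using the naturality properties of $\eta^!$ together with Lemmas \ref{2015.03.23.l1} and \ref{2015.04.10.l3}. Since $\eta'$ is a bijection, the commutativity of the square is equivalent to
$$(\eta')^!\bigl(\chi(V_1)\circ I_{p'}(\Phi(f))\bigr)=(\eta')^!\bigl(\Phi(I_p(f))\circ\chi(V_2)\bigr),$$
so I will compute $(\eta')^!$ of each side.

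For the left-hand side, the composition is with a morphism of target objects, so I apply property (1) of Construction \ref{2015.03.29.constr1} to obtain
$$(\eta')^!\bigl(\chi(V_1)\circ I_{p'}(\Phi(f))\bigr)=D_{p'}\bigl(\Phi(I_p(V_1)),\Phi(f)\bigr)\bigl({\bf\Phi}^2(\eta^!(Id_{I_p(V_1)}))\bigr),$$
after unfolding the definition of $\chi(V_1)$. Lemma \ref{2015.04.10.l3} lets me swap $D_{p'}(\Phi(X),\Phi(f))$ with ${\bf\Phi}^2$, turning the right-hand factor into ${\bf\Phi}^2(D_p(I_p(V_1),f)(\eta^!(Id_{I_p(V_1)})))$; a second application of property (1), this time to $\eta^!$, rewrites this as ${\bf\Phi}^2(\eta^!(Id_{I_p(V_1)}\circ I_p(f)))={\bf\Phi}^2(\eta^!(I_p(f)))$.

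For the right-hand side the composition is on the source side of $\chi(V_2)$, so I instead invoke property (2) of Construction \ref{2015.03.29.constr1} to get
$$(\eta')^!\bigl(\Phi(I_p(f))\circ\chi(V_2)\bigr)=D_{p'}\bigl(\Phi(I_p(f)),\Phi(V_2)\bigr)\bigl({\bf\Phi}^2(\eta^!(Id_{I_p(V_2)}))\bigr).$$
Now Lemma \ref{2015.03.23.l1} exchanges $D_{p'}(\Phi(f'),\Phi(V))$ with ${\bf\Phi}^2$, reducing this to ${\bf\Phi}^2(D_p(I_p(f),V_2)(\eta^!(Id_{I_p(V_2)})))$, and a second use of property (2) of $\eta^!$ collapses this to ${\bf\Phi}^2(\eta^!(I_p(f)\circ Id_{I_p(V_2)}))={\bf\Phi}^2(\eta^!(I_p(f)))$. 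The two sides therefore coincide.

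I do not expect any serious obstacle: once the definition of $\chi$ in terms of $\eta$, $\eta'$ and ${\bf\Phi}^2$ is unfolded, the lemma is a symbol push. The only thing one needs to be careful about is keeping straight which of the two naturality properties of $\eta^!$ is in play on each side (the first because $f$ appears in the second variable on the left, the second because $I_p(f)$ appears in the first variable on the right), and correspondingly invoking Lemma \ref{2015.04.10.l3} on the left but Lemma \ref{2015.03.23.l1} on the right.
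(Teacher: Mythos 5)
Your proposal is correct and follows essentially the same route as the paper: both reduce the two composites, via the naturality of $\eta'$/$(\eta')^!$, Lemma \ref{2015.04.10.l3} on one side and Lemma \ref{2015.03.23.l1} on the other, and the naturality of $\eta^!$, to the common term ${\bf\Phi}^2(\eta^!(I_p(f)))$. The only cosmetic difference is that you apply $(\eta')^!$ to both sides at the outset, whereas the paper keeps $\eta'$ on the outside and pushes morphisms inside it; the computations are identical.
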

\begin{proof}
We have:
$$\chi(V_1)\circ I_{p'}(\Phi(V_1))=\eta'({\bf \Phi}^2(\eta^{!}(Id_{X_1})))\circ I_{p'}(\Phi(f))=\eta'(D_p(X_1,\Phi(f))({\bf\Phi}^2(\eta^{!}(Id_{X_1}))))$$
where $X=I_p(V_1)$, by naturality of $\eta'$. Then
$$\eta'(D_p(X_1,\Phi(f))({\bf\Phi}^2(\eta^{!}(Id_{X_1}))))=\eta'({\bf \Phi}^2(D_p(X_1,f)(\eta^{!}(Id_{X_1}))))=$$
$$\eta'({\bf \Phi}^2(\eta^{!}(Id_{X_1}\circ I_p(f)))=\eta'({\bf \Phi}^2(\eta^{!}(I_p(f))))$$
where the first equality holds by Lemma \ref{2015.04.10.l3} and the second by Problem \ref{2015.03.29.prob1}(1).

On the other hand:
$$\Phi(I_p(f))\circ \chi(V_2)=\Phi(I_p(f))\circ \eta'({\bf\Phi}^2(\eta^{!}(Id_{X_2})))=$$$$\eta'(D_{p'}(\Phi(I_p(f)),\Phi(X_2))({\bf\Phi}^2(\eta^{!}(Id_{X_2}))))$$
by naturality of $\eta'$. Then
$$\eta'(D_{p'}(\Phi(I_p(f)),\Phi(X_2))({\bf\Phi}^2(\eta^{!}(Id_{X_2}))))=\eta'({\bf\Phi}^2(D_p(I_p(f),X_2)(\eta^{!}(Id_{X_2}))))=$$
$$\eta'({\bf\Phi}^2(\eta^{!}(I_p(f)\circ Id_{X_2})))=\eta'({\bf\Phi}^2(\eta^{!}(I_p(f))))$$
where the first equality holds by Lemma \ref{2015.04.10.l3} and the second by Problem \ref{2015.03.29.prob1}(2). This finishes the proof of Lemma \ref{2015.04.10.l4}.
\end{proof}
\begin{lemma}
\llabel{2015.05.06.l1}
For all $X,V\in{\cal C}$ and $a\in D_p(X,V)$ one has
$$\Phi(\eta(a))\circ \chi_{\bf\Phi}(V)=\eta'({\bf\Phi}^2(a))$$
\end{lemma}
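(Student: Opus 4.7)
The plan is to unpack both sides using the defining equation
$\chi_{\bf\Phi}(V) = \eta'({\bf\Phi}^2(\eta^{!}(Id_{I_p(V)})))$ and then reduce the left-hand side to the right-hand side by applying naturality of $\eta'$ (in the $X$ variable) followed by the compatibility of ${\bf\Phi}^2$ with base change in $X$ provided by Lemma \ref{2015.03.23.l1}. There is no serious obstacle; the argument is essentially a chain of three substitutions, and the only thing to watch is that the naturality statement in Problem \ref{2015.03.29.prob1}(2) is formulated for $\eta^{!}$, so I will use it in the equivalent form $f \circ \eta(d) = \eta(D_p(f,V)(d))$ obtained by applying $\eta$ to both sides.

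First, set $g := \eta(a)$, so that $a = \eta^{!}(g)$. By naturality of $\eta'$ with respect to the morphism $\Phi(g):\Phi(I_p(V))\sr \Phi(I_p(V))$ applied to the element ${\bf\Phi}^2(\eta^{!}(Id_{I_p(V)}))\in D_{p'}(\Phi(I_p(V)),\Phi(V))$ (this is the $\eta$-form of Problem \ref{2015.03.29.prob1}(2)), the left-hand side becomes
$$\Phi(\eta(a))\circ \chi_{\bf\Phi}(V) = \Phi(g)\circ \eta'({\bf\Phi}^2(\eta^{!}(Id_{I_p(V)}))) = \eta'\bigl(D_{p'}(\Phi(g),\Phi(V))({\bf\Phi}^2(\eta^{!}(Id_{I_p(V)})))\bigr).$$

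Next, Lemma \ref{2015.03.23.l1} applied to the morphism $g:X\sr I_p(V)$ (with $f=g$, $X'=X$, $X=I_p(V)$) yields
$$D_{p'}(\Phi(g),\Phi(V))\circ {\bf\Phi}^2 = {\bf\Phi}^2 \circ D_p(g,V),$$
so the expression above equals $\eta'({\bf\Phi}^2(D_p(g,V)(\eta^{!}(Id_{I_p(V)}))))$. Finally, Problem \ref{2015.03.29.prob1}(2) (the statement for $\eta^{!}$) gives $D_p(g,V)(\eta^{!}(Id_{I_p(V)})) = \eta^{!}(g\circ Id_{I_p(V)}) = \eta^{!}(g) = a$, and therefore
$$\Phi(\eta(a))\circ \chi_{\bf\Phi}(V) = \eta'({\bf\Phi}^2(a)),$$
which is what we wanted.
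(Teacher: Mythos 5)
Your proof is correct and follows essentially the same route as the paper: unfold $\chi_{\bf\Phi}(V)$, apply the contravariant naturality of $\eta'$, commute ${\bf\Phi}^2$ past the base change via Lemma \ref{2015.03.23.l1}, and identify $D_p(\eta(a),V)(\eta^{!}(Id))$ with $a$ (the paper does this last step by applying $\eta$ and using injectivity, you do it directly in the $\eta^{!}$ form --- an equivalent reformulation). The only blemish is a typo: the morphism $\Phi(g)$ has source $\Phi(X)$, not $\Phi(I_p(V))$.
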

\begin{proof}
By definition of $\chi_{\bf\Phi}$ and contravariant functoriality of $\eta'$ we have
$$\Phi(\eta(a))\circ \chi_{\bf\Phi}(V)=\Phi(\eta(a))\circ \eta'({\bf\Phi}^2(\eta^{!}(Id)))=\eta'(D_{p'}(\Phi(\eta(a)),\Phi(V))({\bf\Phi}^2(\eta^{!}(Id_{I_p(V)}))))$$
By Lemma \ref{2015.03.23.l1} we further have:
$$\eta'(D_{p'}(\Phi(\eta(a)),\Phi(V))({\bf\Phi}^2(\eta^{!}(Id))))=\eta'({\bf\Phi}^2(D_p(\eta(a),V)(\eta^{!}(Id))))$$
It remains to show that $D_p(\eta(a),V)(\eta^{!}(Id))=f$. Since $\eta$ is a bijection we may apply it on both sides and by functoriality of $\eta$ we get
$$\eta(D_p(\eta(a),V)(\eta^{!}(Id)))=\eta(f)\circ \eta(\eta^{!}(Id))=\eta(f)\circ Id=\eta(f).$$
\end{proof}

\subsection{More on universe category functors II}

By \cite[Construction 4.7]{Cfromauniverse} any universe category functor ${\bf \Phi}=(\Phi,\phi,\wt{\phi})$ defines a homomorphism of C-systems
$$H:CC({\cal C},p)\sr CC({\cal C}',p')$$
Let $\psi:pt'\sr \Phi(pt)$ be the unique morphism. To define $H$ on objects, one uses the fact that
$$Ob(CC({\cal C},p))=\amalg_{n\ge 0} Ob_n({\cal C},p)$$
and defines $H(n,A)$ as $(n,H_n(A))$ where
$$H_n:Ob_n({\cal C},p)\sr Ob_n({\cal C}',p')$$
To obtain $H_n$ one defines by induction on $n$, pairs $(H_n,\psi_n)$ where $H_n$ is as above and $\psi_n$ is a family of isomorphisms 
$$\psi_{n}(A):int'(H_n(A))\sr \Phi(int(A))$$
as follows:
\begin{enumerate}
\item for $n=0$, $H_0$ is the unique map from one point set to one point set and $\psi_{0}(A)=\psi$,
\item for the successor of $n$ one has 
$$H_{n+1}(A,F)=(H_n(A),\psi_n(A)\circ\Phi(F)\circ \phi)$$
and $\psi_{n+1}{A,F}$ is the unique morphism $int'(H(A,F))\sr \Phi(int(A,F))$ such that
$$\psi(A,F)\circ \Phi(Q(F))\circ\wt{\phi}=Q'(\psi(A)\circ\Phi(F)\circ\phi)$$
and
$$\psi(A,F)\circ \Phi(p_(A,F))=p_{H(A,F)}\circ \psi(A)$$
\end{enumerate}
The action of $H$ on morphisms is given, for $f:(m,A)\sr(n,B)$, by
$$H(f)=\psi(A)\circ\Phi(int(f))\circ\psi(B)^{-1}$$
We will often write $H$ also for the functions $H_n$ and $\psi$ for the functions $\psi_n$.

Let $\Gamma\in Ob(CC({\cal C},p))$ and consider the bijections of Constructions \ref{2015.05.02.constr1} and \ref{2015.05.02.constr2}. 

In order to prove our main functoriality Theorem \ref{2015.03.21.th1} we need describe in more detail the maps 
$$Ob_1(\Gamma)\sr Ob_1(H(\Gamma))$$
$$Ob_2(\Gamma)\sr Ob_2(H(\Gamma))$$
and the similar maps on $\wt{Ob}_1$ and $\wt{Ob}_2$ that are defined by $H$.

\begin{lemma}
\llabel{2015.03.21.l4}
Let $(\Phi,\phi,\wt{\phi})$ be universe category functor. Then:
\begin{enumerate}
\item for $T\in Ob_1(\Gamma)$ one has 
$$u_{1,H(\Gamma)}(H(T))=\psi(\Gamma)\circ\Phi(u_{1,\Gamma}(T))\circ\phi$$
\item for $o\in \wt{Ob}_1(\Gamma)$ one has
$$\wt{u}_{1,H(\Gamma)}(H(o))=\psi(\Gamma)\circ\Phi(\wt{u}_{1,\Gamma}(o))\circ\wt{\phi}$$ 
\item for $T\in Ob_2(\Gamma)$ one has
$$u_{2,H(\Gamma)}(H(T))=D_{p'}(\psi(\Gamma),U')(D_{p'}(int'(H(\Gamma)),\phi)({\bf\Phi}^2(u_{2,\Gamma}(T))))$$ 
\item for $o\in\wt{Ob}_2(\Gamma)$ one has
$$\wt{u}_{2,H(\Gamma)}(H(o))=D_{p'}(\psi(\Gamma),\wt{U}')(D_{p'}(int'(H(\Gamma)),\wt{\phi})({\bf\Phi}^2(\wt{u}_{2,\Gamma}(o))))$$
\end{enumerate}
\end{lemma}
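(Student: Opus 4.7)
The plan is to verify the four statements in order, each reducing to unfolding definitions and exploiting the two defining equations of $\psi_{n+1}(A,F)$ recalled above.

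Part (1) is direct: writing $\Gamma = (n,A)$ and $T = (n+1,(A,F))$ one has $u_{1,\Gamma}(T) = F$ and $H_{n+1}(A,F) = (H_n(A),\psi_n(A)\circ\Phi(F)\circ\phi)$, so $u_{1,H(\Gamma)}(H(T)) = \psi(\Gamma)\circ\Phi(F)\circ\phi$. For (2) I would expand $\wt{u}_{1,H(\Gamma)}(H(o)) = int'(H(o))\circ Q'(u_{1,H(\Gamma)}(H(\partial(o))))$ using $\partial\circ H = H\circ\partial$, substitute $int'(H(o)) = \psi(\Gamma)\circ\Phi(int(o))\circ\psi(\partial(o))^{-1}$ from the definition of $H$ on morphisms, apply (1) to the $Q'$-argument, and then cancel $\psi(\partial(o))^{-1}$ against $Q'(\cdots)$ using the defining identity $\psi(\partial(o))\circ\Phi(Q(F))\circ\wt{\phi} = Q'(\psi(\Gamma)\circ\Phi(F)\circ\phi)$ (with $F = u_{1,\Gamma}(\partial(o))$). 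This collapses the right-hand side to $\psi(\Gamma)\circ\Phi(int(o)\circ Q(F))\circ\wt{\phi} = \psi(\Gamma)\circ\Phi(\wt{u}_{1,\Gamma}(o))\circ\wt{\phi}$.

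For (3), set $(F_1,F_2) = u_{2,\Gamma}(T)$. Unpacking the definitions of $D_{p'}(-,-)$, Construction \ref{2015.05.02.constr2}, and Construction \ref{2015.04.12.constr1}, the right-hand side evaluates to
\[
\bigl(\psi(\Gamma)\circ\Phi(F_1)\circ\phi,\ Q'(\psi(\Gamma),\Phi(F_1)\circ\phi)\circ\iota\circ\Phi(F_2)\circ\phi\bigr),
\]
with $\iota : (\Phi(int(\Gamma));\Phi(F_1)\circ\phi)\sr\Phi(int(\Gamma);F_1)$ the isomorphism of Construction \ref{2015.04.12.constr1}. The left-hand side equals $(u_{1,H(\Gamma)}(H(ft(T))),\,u_{1,H(ft(T))}(H(T)))$, which by (1) is $(\psi(\Gamma)\circ\Phi(F_1)\circ\phi,\ \psi(ft(T))\circ\Phi(F_2)\circ\phi)$. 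The first coordinates agree immediately, and matching the second coordinates reduces to the single identity $Q'(\psi(\Gamma),\Phi(F_1)\circ\phi)\circ\iota = \psi(ft(T))$.

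I expect this last identity to be the main obstacle. Both sides are morphisms with codomain $\Phi(int(\Gamma);F_1) = \Phi(int(ft(T)))$, which by the hypothesis that $\Phi$ preserves pull-back squares based on $p$ combined with the pull-back square (\ref{2015.03.21.sq1}) is a pull-back of $p'$ along $\Phi(F_1)\circ\phi$, with projections $\Phi(p_{A,F_1})$ and $\Phi(Q(F_1))\circ\wt{\phi}$. I would therefore check the equality by composing with each projection. The composition with $\Phi(p_{A,F_1})$ reduces, using (\ref{2015.04.08.eq1}) together with the naturality $Q'(f,F)\circ p'_{X,F} = p'_{X',f\circ F}\circ f$, to the second defining equation $\psi(A,F_1)\circ\Phi(p_{A,F_1}) = p_{H(A,F_1)}\circ\psi(A)$. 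The composition with $\Phi(Q(F_1))\circ\wt{\phi}$ reduces, using (\ref{2015.04.08.eq2}) and \cite[Lemma 2.5]{Cfromauniverse} in the form $Q'(\psi(\Gamma),\Phi(F_1)\circ\phi)\circ Q'(\Phi(F_1)\circ\phi) = Q'(\psi(\Gamma)\circ\Phi(F_1)\circ\phi)$, to the first defining equation $\psi(A,F_1)\circ\Phi(Q(F_1))\circ\wt{\phi} = Q'(\psi(A)\circ\Phi(F_1)\circ\phi)$. Part (4) is handled by the identical strategy: the first coordinate is controlled by (1), the second by (2) applied to $o$ at base $ft(\partial(o))$, and the key intermediate identity $Q'(\psi(\Gamma),\Phi(F_1)\circ\phi)\circ\iota = \psi(ft(\partial(o)))$ (now with $F_1 = u_{1,\Gamma}(ft(\partial(o)))$) is exactly the one already established in (3).
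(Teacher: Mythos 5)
Your proposal is correct and follows essentially the same route as the paper: parts (1) and (2) by direct unfolding and the defining equations of $\psi$, and parts (3)--(4) by reducing the second coordinate to the identity $Q'(\psi(\Gamma),\Phi(F_1)\circ\phi)\circ\iota=\psi(ft(T))$, verified against the two projections of the pull-back $\Phi(int(A);F_1)$ exactly as in the paper's proof (which phrases it as checking the two compositions with $\Phi(p_{(A,F_1)})$ and $\Phi(Q(F_1))\circ\wt{\phi}$).
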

\begin{proof}
Let $\Gamma=(n,A)$.

In the case of $T\in Ob_1(\Gamma)$, if $T=(n+1,(A,F)$ then
$$u_1(H(T))=u_1(n+1,H(A,F))=u_1(n+1,(H(A), \psi(\Gamma)\circ\Phi(F)\circ\phi))=\psi(\Gamma)\circ\Phi(F)\circ\phi$$
In the case of $s\in \wt{Ob}_1(\Gamma)$, if $F=u_1(\partial(s))$ then
$$\wt{u}_1(H(s))=H(s)\circ Q'(u_1(n+1,H(A,F)))=\psi(A)\circ \Phi(s)\circ \psi(A,F)^{-1}\circ Q'(\psi(A)\circ\Phi(F)\circ\phi)=$$
$$\psi(A)\circ \Phi(s)\circ \Phi(Q(F))\circ\wt{\phi}=\psi(A)\circ \Phi(s\circ Q(F))\circ \wt{\phi}=\psi(A)\circ\Phi(\wt{u}_1(s))\circ\wt{\phi}$$
In the case $T\in Ob_2(\Gamma)$, if $T=(n+2,((A,F_1),F_2))$ then
$$u_2(H(T))=u_2(n+2,H(((A,F_1),F_2)))=u_2(n+2,(H(A,F_1),\psi(A,F_1)\circ \Phi(F_2)\circ \phi))=$$
$$u_2(n+2,(H(A),\psi(A)\circ\Phi(F_1)\circ\phi,\psi(A,F_1)\circ\Phi(F_2)\circ\phi))=$$
$$(\psi(A)\circ\Phi(F_1)\circ\phi,\psi(A,F_1)\circ\Phi(F_2)\circ\phi)$$
On the other hand 
$$D_{p'}(\psi(A),-)D_{p'}(-,\phi)({\bf\Phi}^2(u_2(T)))=D_{p'}(\psi(A),-)D_{p'}(-,\phi)({\bf\Phi}^2(u_2(n+2,((A,F_1),F_2))))=$$
$$D_{p'}(\psi(A),-)D_{p'}(-,\phi)({\bf\Phi}^2(F_1,F_2))=D_{p'}(\psi(A),-)D_{p'}(-,\phi)(\Phi(F_1)\circ\phi,\iota\circ\Phi(F_2))=$$
$$D_{p'}(\psi(A),-)(\Phi(F_1)\circ\phi,\iota\circ\Phi(F_2)\circ\phi)=(\psi(A)\circ \Phi(F_1)\circ\phi, Q'(\psi(A),\Phi(F_1)\circ\phi)\circ\iota\circ\Phi(F_2)\circ\phi)$$
therefore we need to show that
\begin{eq}
\llabel{2015.04.12.eq1}
\psi(A,F_1)\circ\Phi(F_2)\circ\phi=Q'(\psi(A),\Phi(F_1)\circ\phi)\circ\iota\circ\Phi(F_2)\circ\phi
\end{eq}
Using the fact that the external square of the diagram
$$
\begin{CD}
\Phi(int(A,F_1)) @>\Phi(Q(F_1))>> \Phi(\wt{U}) @>\wt{\phi}>> \wt{U}'\\
@V\Phi(p_{(A,F_1)})VV @VV\Phi(p)V @VVp'V\\
\Phi(int(A)) @>\Phi(F_1)>> \Phi(U) @>\phi>> U'
\end{CD}
$$
is a pull-back square we see that equality (\ref{2015.04.12.eq1}) would follow from the following two equalities: 
$$\psi(A,F_1)\circ \Phi(Q(F_1))\circ \wt{\phi}=Q'(\psi(A),\Phi(F_1)\circ\phi)\circ\iota\circ \Phi(Q(F_1))\circ \wt{\phi}$$
and
$$\psi(A,F_1)\circ \Phi(p_{(A,F_1)})=Q'(\psi(A),\Phi(F_1)\circ\phi)\circ\iota\circ \Phi(p_{(A,F_1)})$$
For the first equality we have
$$\psi(A,F_1)\circ \Phi(Q(F_1))\circ \wt{\phi}=Q'(\psi(A)\circ\Phi(F_1)\circ \phi)$$
by definition of $\psi(\Gamma,F_1)$ and
$$Q'(\psi(A),\Phi(F_1)\circ\phi)\circ\iota\circ \Phi(Q(F_1))\circ \wt{\phi}=Q'(\psi(A),\Phi(F_1)\circ\phi)\circ Q'(\Phi(F_1)\circ \phi)=Q'(\psi(A)\circ\Phi(F_1)\circ \phi)$$
where the first equality holds by definition of $\iota$ and second by the definition of $Q(-,-)$. 

For the second equality we have 
$$\psi(A,F_1)\circ \Phi(p_{(A,F_1)})=p_{H(A,F_1)}\circ \psi(A)$$
by definition of $\psi(A,F_1)$ and
$$Q'(\psi(A),\Phi(F_1)\circ\phi)\circ\iota\circ \Phi(p_{(A,F_1)})=Q'(\psi(A),\Phi(F_1)\circ\phi)\circ p_{\Phi(int(A)),\Phi(F_1)\circ\phi}=p_{H(A,F_1)}\circ\psi_{\Gamma}$$
by definitions of $Q'$ and $\iota$.

The case of $s\in \wt{Ob}_2(\Gamma)$ is strictly parallel to the case of $T\in Ob_2(\Gamma)$ with $\Phi(F_2)\circ\phi$ at the end of the formulas replaced by $\Phi(F_2')\circ \wt{\phi}$ where instead of $F_2:int(A,F_1)\sr U$ one has $F_2':int(A,F_1)\sr\wt{U}$.  
\end{proof}
For $(\Phi,\phi,\wt{\phi})$ as above let us denote by 
$$\xi_{\bf\Phi}:\Phi(I_p(U))\sr I_{p'}(U')$$
the composition $\chi_{\bf\Phi}(U)\circ I_{p'}(\phi)$ and by
$$\wt{\xi}_{\bf\Phi}:\Phi(I_p(\wt{U}))\sr I_{p'}(\wt{U}')$$
the composition $\chi_{\bf\Phi}(\wt{U})\circ I_p(\wt{\phi})$. 

\begin{lemma}
\llabel{2015.05.06.l2}
Let $(\Phi,\phi,\wt{\phi})$ be a universe category functor and $\Gamma\in Ob(CC({\cal C},p))$. Then one has:
\begin{enumerate}
\item for $T\in Ob_2(\Gamma)$
$$\eta_{p'}(u_2'(H(T)))=\psi(\Gamma)\circ \Phi(\eta_p(u_2(T)))\circ \xi_{\bf\Phi}$$
\item for $s\in \wt{Ob}_2(\Gamma)$
$$\eta_{p'}(\wt{u}_2'(H(s)))=\psi(\Gamma)\circ \Phi(\eta_p(\wt{u}_2(s)))\circ \wt{\xi}_{\bf\Phi}$$
\end{enumerate}
\end{lemma}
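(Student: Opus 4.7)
The plan is to reduce both parts to a straightforward chain of rewrites that combine Lemma \ref{2015.03.21.l4}, the two naturality properties of $\eta$ from Problem \ref{2015.03.29.prob1}, and Lemma \ref{2015.05.06.l1}, since these three ingredients together convert between the universe-category-functor description of $H$ and the $I_p$-representation on which $\xi_{\bf\Phi}$ and $\wt{\xi}_{\bf\Phi}$ are built. I would start with part (1) and then observe that part (2) is strictly parallel, with $U$, $\phi$, and $u_2$ uniformly replaced by $\wt{U}$, $\wt{\phi}$, and $\wt{u}_2$.

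For part (1), first I would apply Lemma \ref{2015.03.21.l4}(3) to replace $u_2'(H(T))$ by the iterated application
$$D_{p'}(\psi(\Gamma),U')\bigl(D_{p'}(int'(H(\Gamma)),\phi)({\bf\Phi}^2(u_2(T)))\bigr).$$
Applying $\eta_{p'}$ and invoking Problem \ref{2015.03.29.prob1}(2) (naturality of $\eta$ in the source) peels off the outer $D_{p'}(\psi(\Gamma),U')$ as a left precomposition with $\psi(\Gamma)$; applying Problem \ref{2015.03.29.prob1}(1) (naturality in the target) peels off $D_{p'}(\mathrm{int}'(H(\Gamma)),\phi)$ as a right composition with $I_{p'}(\phi)$. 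This leaves $\eta_{p'}({\bf\Phi}^2(u_2(T)))$ in the middle, which by Lemma \ref{2015.05.06.l1} (applied at $V=U$, $a=u_2(T)$, $X=\mathrm{int}(\Gamma)$) equals $\Phi(\eta_p(u_2(T)))\circ\chi_{\bf\Phi}(U)$. Collecting terms and using the definition $\xi_{\bf\Phi}=\chi_{\bf\Phi}(U)\circ I_{p'}(\phi)$ gives
$$\eta_{p'}(u_2'(H(T)))=\psi(\Gamma)\circ\Phi(\eta_p(u_2(T)))\circ\chi_{\bf\Phi}(U)\circ I_{p'}(\phi)=\psi(\Gamma)\circ\Phi(\eta_p(u_2(T)))\circ\xi_{\bf\Phi},$$
as required.

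Part (2) is handled the same way, now invoking Lemma \ref{2015.03.21.l4}(4) in place of (3), Lemma \ref{2015.05.06.l1} at $V=\wt{U}$, and the definition $\wt{\xi}_{\bf\Phi}=\chi_{\bf\Phi}(\wt{U})\circ I_{p'}(\wt{\phi})$; the two naturality invocations of $\eta$ are entirely formal and go through unchanged.

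There is no real obstacle here beyond bookkeeping: the substance has already been done in Lemmas \ref{2015.03.21.l4} and \ref{2015.05.06.l1}. The only mildly delicate point is ensuring that the side of $\eta$ on which each naturality acts matches the side of the composition $D_{p'}(\psi(\Gamma),U')\circ D_{p'}(\mathrm{int}'(H(\Gamma)),\phi)$; since the two $D_{p'}$ operations commute (as noted just before Problem \ref{2015.05.02.prob2}), the order in which they are peeled off is immaterial, and the computation lines up cleanly.
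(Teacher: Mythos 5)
Your proof is correct and follows essentially the same route as the paper's: apply Lemma \ref{2015.03.21.l4}(3), peel off the two $D_{p'}$ operations via the naturality properties of $\eta$ from Problem \ref{2015.03.29.prob1}, then invoke Lemma \ref{2015.05.06.l1} and the definition of $\xi_{\bf\Phi}$, with part (2) strictly parallel. The paper merely compresses the two naturality steps into a single appeal to ``the naturality of $\eta_{p'}$,'' which you have made explicit.
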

\begin{proof}
We have
$$\eta_{p'}(u_2'(H(T)))=\eta_{p'}(D_{p'}(\psi(\Gamma),\_)(D_{p'}(\_,\phi)({\bf\Phi}^2(u_2(T)))))=\psi(\Gamma)\circ\eta_{p'}({\bf \Phi}^2(u_2(T)))\circ I_{p'}(\phi)$$
where the first equality holds by Lemma \ref{2015.03.21.l4}(3) and the second by the naturality of $\eta_{p'}$. Next
$$\eta_{p'}({\bf \Phi}^2(u_2(T)))\circ I_{p'}(\phi)=\Phi(\eta(u_2(T)))\circ\chi_{\Phi}(U)\circ I_{p'}(\phi)=\Phi(\eta(u_2(T)))\circ \xi_{\bf\Phi}$$
where the first equality holds by Lemma \ref{2015.05.06.l1} and the second one by the definition of $\xi_{\bf\Phi}$. 

The proof of the second part of the lemma is strictly parallel to the proof of the first part. 
\end{proof}

\subsection{Functoriality properties of the $(\Pi,\lambda)$-structures arising from universes}

Let us prove the functoriality properties of the $(\Pi,\lambda)$ structures of Construction \ref{2015.03.17.constr3}.

The notion of a homomorphism of C-systems with $(\Pi,\lambda)$-structures used in the theorem below is defined in the obvious way. 
\begin{theorem}
\llabel{2015.03.21.th1}
Let $(\Phi,\phi,\wt{\phi})$ be as above and let $(P,\wt{P})$, $(P',\wt{P}')$ be as in Problem \ref{2015.03.17.prob0} for $\cal C$ and $\cal C'$ respectively.

Assume that the squares
\begin{eq}\llabel{2015.03.23.sq1a}
\begin{CD}
\Phi(I_p(U)) @>\xi_{\bf\Phi}>>  I_{p'}(U')\\
@V\Phi(P) VV @VV P' V\\
\Phi(U) @>\phi>> U
\end{CD}
\end{eq}
and
\begin{eq}\llabel{2015.03.23.sq1b}
\begin{CD}
\Phi(I_p(\wt{U})) @>\wt{\xi}_{\bf\Phi}>>  I_{p'}(\wt{U}')\\
@V\Phi(\wt{P}) VV @VV \wt{P}' V\\
\Phi(\wt{U}) @>\wt{\phi}>> \wt{U}
\end{CD}
\end{eq}
commute. Then the homomorphism 
$$H(\Phi,\phi,\wt{\phi}):CC({\cal C},p)\sr CC ({\cal C}',p')$$
is a homomorphism of C-systems with $(\Pi,\lambda)$-structures.
\end{theorem}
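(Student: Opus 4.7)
The plan is to unwind what it means for $H$ to be a homomorphism of C-systems with $(\Pi,\lambda)$-structures and then reduce each required equality, via the bijections $u_1$, $\wt{u}_1$, $\mu_2$, $\wt{\mu}_2$, to one of the two commutative squares (\ref{2015.03.23.sq1a}) or (\ref{2015.03.23.sq1b}) we have assumed. The conditions we need to verify are precisely
$$H(\Pi_\Gamma(T))=\Pi'_{H(\Gamma)}(H(T)) \quad \text{and} \quad H(\lambda_\Gamma(s))=\lambda'_{H(\Gamma)}(H(s))$$
for $T\in Ob_2(\Gamma)$ and $s\in\wt{Ob}_2(\Gamma)$. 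Since $u'_{1,H(\Gamma)}$ is a bijection, the first equality is equivalent to $u'_{1,H(\Gamma)}(H(\Pi_\Gamma(T)))=u'_{1,H(\Gamma)}(\Pi'_{H(\Gamma)}(H(T)))$, and similarly for the second via $\wt{u}'_{1,H(\Gamma)}$.

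For the $\Pi$ case, first I would rewrite the left hand side using Lemma \ref{2015.03.21.l4}(1), together with the defining formula $u_{1,\Gamma}(\Pi_\Gamma(T))=\mu_{2,\Gamma}(T)\circ P$ from Construction \ref{2015.03.17.constr3}. This gives
$$u'_{1,H(\Gamma)}(H(\Pi_\Gamma(T)))=\psi(\Gamma)\circ\Phi(\mu_{2,\Gamma}(T))\circ\Phi(P)\circ\phi.$$
Then I would rewrite the right hand side by using the definition of $\Pi'$ at $H(\Gamma)$ together with Lemma \ref{2015.05.06.l2}(1), which yields
$$u'_{1,H(\Gamma)}(\Pi'_{H(\Gamma)}(H(T)))=\mu'_{2,H(\Gamma)}(H(T))\circ P'=\psi(\Gamma)\circ\Phi(\mu_{2,\Gamma}(T))\circ \xi_{\bf\Phi}\circ P'.$$
Cancelling the common prefix $\psi(\Gamma)\circ\Phi(\mu_{2,\Gamma}(T))$ on the left, the required equality is reduced exactly to $\Phi(P)\circ\phi=\xi_{\bf\Phi}\circ P'$, which is the commutativity of square (\ref{2015.03.23.sq1a}).

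The $\lambda$ case is strictly parallel: apply $\wt{u}'_{1,H(\Gamma)}$ to both sides, unwind the left side using Lemma \ref{2015.03.21.l4}(2) and $\wt{u}_{1,\Gamma}(\lambda_\Gamma(s))=\wt{\mu}_{2,\Gamma}(s)\circ\wt{P}$, and unwind the right side using Lemma \ref{2015.05.06.l2}(2). The resulting equation reduces, after cancelling the common prefix $\psi(\Gamma)\circ\Phi(\wt{\mu}_{2,\Gamma}(s))$, to $\Phi(\wt{P})\circ\wt{\phi}=\wt{\xi}_{\bf\Phi}\circ\wt{P}'$, i.e.\ the commutativity of square (\ref{2015.03.23.sq1b}).

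The main obstacle is conceptually minor but notationally heavy: carefully tracking which bijection lives where and verifying that the prefix factors indeed cancel. In particular one needs to make sure that in the invocation of Lemma \ref{2015.05.06.l2} for the formula $\mu'_{2,H(\Gamma)}(H(T))=\psi(\Gamma)\circ\Phi(\mu_{2,\Gamma}(T))\circ\xi_{\bf\Phi}$, the composition with $P'$ (resp.\ $\wt{P}'$) is associated on the correct side so that the target squares (\ref{2015.03.23.sq1a}), (\ref{2015.03.23.sq1b}) appear. Once the correct placement is secured, the proof is a two-line calculation in each case.
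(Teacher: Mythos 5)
Your proposal is correct and follows essentially the same route as the paper: both sides are transported through the bijection $u'_{1,H(\Gamma)}$, the $H(\Pi(T))$ side is unwound via Lemma \ref{2015.03.21.l4}(1) and the $\Pi'(H(T))$ side via Lemma \ref{2015.05.06.l2}(1), reducing the claim to the assumed commutativity of (\ref{2015.03.23.sq1a}), with the $\lambda$ case strictly parallel via (\ref{2015.03.23.sq1b}). This matches the paper's proof step for step.
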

\begin{proof}
We have to show that for all $\Gamma\in Ob(CC({\cal C},p))$ and $T\in Ob_2(\Gamma)$ we have
$$\Pi'(H(T))=H(\Pi(T))$$
and for all $\Gamma\in Ob(CC({\cal C},p))$ and $s\in \wt{Ob}_2(\Gamma)$ we have
$$\lambda'(H(s))=H(\lambda(s))$$
We will prove the first equality. The proof of the second is strictly parallel to the proof of the first.

By definition we have:
$$\Pi'(H(T))=(u_1')^{-1}(u_2'(H(T))\circ P')=(u_1')^{-1}(\eta'(u_2'(H(T)))\circ P')$$
and
$$H(\Pi(T))=H(u_1^{-1}(\eta(u_2(T))\circ P))=(u_1')^{-1}(\psi(\Gamma)\circ \Phi(\eta(u_2(T))\circ P)\circ \phi)=$$$$(u_1')^{-1}(\psi(\Gamma)\circ \Phi(\eta(u_2(T)))\circ \Phi(P)\circ \phi)$$
where the second equality holds by Lemma \ref{2015.03.21.l4}(1). Let us show that 
$$\eta'(u_2'(H(T)))\circ P'=\psi(\Gamma)\circ \Phi(\eta(u_2(T)))\circ \Phi(P)\circ \phi$$
By Lemma \ref{2015.05.06.l2}(1) we have
$$\eta'(u_2'(H(T)))\circ P'=\psi(\Gamma)\circ\Phi(\eta(u_2(T)))\circ \xi_{\Phi}\circ P'$$
It remains to show that 
$$\xi_{\Phi}\circ P'=\Phi(P)\circ \phi$$
which is our assumption about the commutativity of the square (\ref{2015.03.23.sq1a}). 
\end{proof}
%

%???

\subsection{Appendix: some constructions and theorems about categories}
\begin{lemma}
\llabel{2015.04.16.l1}
Let $\cal C$ be a category. Consider four fiber squares
$$
\begin{CD}
pb_i @>pr_{Y,i}>> Y\\
@Vpr_{X,i}VV @VVgV\\
X @>f>> Z
\end{CD}
\spc\spc
\begin{CD}
pb'_i @>pr_{Y',i}>> Y'\\
@Vpr_{X,i}VV @VVg'V\\
X' @>f'>> Z
\end{CD}
$$
where $i=1,2$. Let $a:X'\sr X$ and $b:Y'\sr Y$ be such that $a\circ f=f'$ and $b\circ g=g'$. Let $\iota:pb_1\sr pb_2$ be the unique morphism such that $\iota\circ pr_{X_2}=pr_{X,1}$ and $\iota\circ pr_{Y,1}=pr_{Y,2}$ and similarly for $\iota':pb_1'\sr pb_2'$. Let $pb_i(a,b):pb_i'\sr pb_i$ be the unique morphisms such that $pb_i(a,b)\circ pr_{X,i}=pr_{X',i}\circ a$ and $pb_i(a,b)\circ pr_{Y,i}=b\circ pr_{Y',i}$. Then the square
$$
\begin{CD}
pb_1' @>pb_1(a,b)>> pb_1\\
@V\iota' VV @VV\iota V\\
pb_2' @>pb_2(a,b)>> pb_2
\end{CD}
$$
commutes, i.e., $pb_1(a,b)\circ \iota=\iota'\circ pb_2(a,b)$.
\end{lemma}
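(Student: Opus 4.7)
The plan is to exploit the universal property of $pb_2$ as a fibre product of $f$ and $g$. Both sides of the desired equation are morphisms $pb_1'\sr pb_2$, so it suffices to verify that they agree after post-composition with each of the two projections $pr_{X,2}$ and $pr_{Y,2}$; the uniqueness part of the universal property then forces the equality.

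First I would compute both compositions with $pr_{X,2}$. On the left, using first the defining property of $\iota$ and then that of $pb_1(a,b)$:
$$pb_1(a,b)\circ \iota\circ pr_{X,2}=pb_1(a,b)\circ pr_{X,1}=pr_{X',1}\circ a.$$
On the right, using first the defining property of $pb_2(a,b)$ and then that of $\iota'$:
$$\iota'\circ pb_2(a,b)\circ pr_{X,2}=\iota'\circ pr_{X',2}\circ a=pr_{X',1}\circ a.$$
So the two compositions with $pr_{X,2}$ coincide.

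The computation with $pr_{Y,2}$ is strictly parallel. On the left, $pb_1(a,b)\circ \iota\circ pr_{Y,2}=pb_1(a,b)\circ pr_{Y,1}$, which by the defining property of $pb_1(a,b)$ equals $b\circ pr_{Y',1}$. On the right, $\iota'\circ pb_2(a,b)\circ pr_{Y,2}=\iota'\circ (b\circ pr_{Y',2})$; unfolding using the defining property of $\iota'$ gives $b\circ pr_{Y',1}$. So both compositions with $pr_{Y,2}$ coincide as well.

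I do not expect a genuine obstacle here: the statement is a purely formal consequence of the two pairs of universal properties, and the proof is a routine diagram chase whose only subtle point is bookkeeping in the diagrammatic composition order. If anything merits care it is the order in which the defining equations of $\iota,\iota',pb_1(a,b),pb_2(a,b)$ are applied on each side, so that both reductions are carried out to the same canonical form before comparing.
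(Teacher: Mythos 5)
Your proof is correct and follows exactly the paper's argument: reduce to the universal property of $pb_2$ and check that both sides agree after composition with $pr_{X,2}$ and $pr_{Y,2}$, using the defining equations of $\iota$, $\iota'$, $pb_1(a,b)$, $pb_2(a,b)$ in the same order. The paper carries out the $pr_{X,2}$ computation verbatim as you do and dismisses the $pr_{Y,2}$ case as similar, so nothing is missing.
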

\begin{proof}
Since $pb_2$ is a fiber product it is sufficient to prove that
$$pb_1(a,b)\circ \iota\circ pr_{X,2}=\iota'\circ pb_2(a,b)\circ pr_{X,2}$$
and
$$pb_1(a,b)\circ \iota\circ pr_{Y,2}=\iota'\circ pb_2(a,b)\circ pr_{Y,2}$$
For the first one we have:
$$pb_1(a,b)\circ \iota\circ pr_{X,2}=pb_1(a,b)\circ pr_{X,1}=pr_{X',1}\circ a$$
and
$$\iota'\circ pb_2(a,b)\circ pr_{X,2}=\iota'\circ pr_{X',2}\circ a=pr_{X',1}\circ a$$
The verification of the second equality is similar.
\end{proof}
\begin{definition}
\llabel{2015.04.22.def1}
A category with fiber products is a category together with, for all pairs of morphisms of the form $f:X\sr Z$, $g:Y\sr Z$, fiber squares
$$
\begin{CD}
(X,f)\times_Z (Y,g) @>pr^{(X,f),(Y,g)}_2>> Y\\
@Vpr^{(X,f),(Y,g)}_1 VV @VVg V\\
X @>f>> Z
\end{CD}
$$
We will often abbreviate these main notations in various ways. The morphism $pr_2\circ g=pr_1\circ f$ from $(X,f)\times(Y,g)$ to $Z$ is denoted by $f\dd g$.
\end{definition}
Given a category with fiber products, morphisms $g_i:Y_i\sr Z$, $i=1,2$ and morphisms $a:X_1\sr Y_1$, $b:X_2\sr Y_2$ denote by 
$$(a\times b)^{g_1,g_2}:((X_1,a\circ g_1)\times_Z (X_2,b\circ g_2), (a\circ g_1)\dd (b\circ g_2))\sr ((Y_1,g_1)\times_Z (Y_2,g_2),g_1\dd g_2)$$
the unique morphism over $Z$ such that
$$(a\times b)^{g_1,g_2}\circ pr_1=pr_1\circ a$$
and
$$(a\times b)^{g_1,g_2}\circ pr_2=pr_2\circ b$$
To show that $(a\times b)^{g_1,g_2}$ exists we need to check that 
$$pr_1\circ a\circ g_1=pr_2\circ b\circ g_2$$
which is immediate from the definition of the fiber product.
\begin{lemma}
\llabel{2015.05.14.l1}
In the setting introduced above suppose that we have in addition $a':X_1'\sr X_1$ and $b':X_2'\sr X_2$. Then one has
$$((a'\circ a)\times(b'\circ b))^{g_1,g_2}=(a'\times b')^{a\circ g_1, b\circ g_2}\circ (a\times b)^{g_1,g_2}$$
\end{lemma}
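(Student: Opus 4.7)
The plan is to invoke the universal property of the fiber product $(Y_1,g_1)\times_Z(Y_2,g_2)$, which is the codomain of both morphisms in the claimed equality. Since both sides are morphisms from $(X_1',(a'\circ a)\circ g_1)\times_Z(X_2',(b'\circ b)\circ g_2)$ into this fiber product, it suffices to verify that they agree after composition with each of $pr_1:(Y_1,g_1)\times_Z(Y_2,g_2)\sr Y_1$ and $pr_2:(Y_1,g_1)\times_Z(Y_2,g_2)\sr Y_2$.

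First I would expand the left-hand side directly using the defining equations of $((a'\circ a)\times(b'\circ b))^{g_1,g_2}$ given right after Definition \ref{2015.04.22.def1}: its compositions with $pr_1$ and $pr_2$ are $pr_1\circ(a'\circ a)$ and $pr_2\circ(b'\circ b)$ respectively. Next I would expand the right-hand side by applying the defining equations twice: composing with $pr_1$, one has $(a'\times b')^{a\circ g_1,b\circ g_2}\circ(a\times b)^{g_1,g_2}\circ pr_1=(a'\times b')^{a\circ g_1,b\circ g_2}\circ pr_1\circ a=pr_1\circ a'\circ a$, and symmetrically with $pr_2$. By associativity of composition the two sets of equations agree, so the uniqueness clause of the universal property forces the two morphisms to coincide.

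A preliminary bookkeeping step is to confirm that the right-hand composite is actually well defined, i.e.\ that the codomain of $(a\times b)^{g_1,g_2}$ matches the domain of $(a'\times b')^{a\circ g_1,b\circ g_2}$; this is immediate since both equal $(X_1,a\circ g_1)\times_Z(X_2,b\circ g_2)$. One should also note that the domain of $((a'\circ a)\times(b'\circ b))^{g_1,g_2}$ is $(X_1',a'\circ a\circ g_1)\times_Z(X_2',b'\circ b\circ g_2)$, which by construction coincides with the domain of $(a'\times b')^{a\circ g_1,b\circ g_2}$, so the equality is an equality of parallel morphisms.

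There is no real obstacle here: the proof is a two-line universal-property argument, with all the substantive work already absorbed into the well-definedness of the maps $(-\times-)^{-,-}$. The only mild care needed is to keep track of which projection morphism refers to which fiber product at each stage, since the same symbol $pr_i$ is reused across four different pullbacks.
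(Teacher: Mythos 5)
Your proof is correct and is exactly the argument the paper intends: the paper's entire proof reads ``Straightforward rewriting to compute the compositions of both sides with $pr^{g_1,g_2}_1$ and $pr^{g_1,g_2}_2$,'' which is precisely your universal-property computation, and your bookkeeping of domains and of which $pr_i$ belongs to which pullback fills in the details the paper leaves implicit.
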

\begin{proof}
Straightforward rewriting to compute the compositions of both sides with $pr^{g_1,g_2}_1$ and $pr^{g_1,g_2}_2$.
\end{proof}

\begin{definition}
\llabel{2015.03.27.def1}
A locally cartesian closed structure on a (pre-)category $\cal C$ is a collection of data of the form:
\begin{enumerate}
\item A structure of a category with fiber products on $\cal C$.
\item For all $f$, $g$ of the form $f:X\sr Z$, $g:Y\sr Z$, an object $\uu{Hom}_Z((X,f),(Y,g))$ and a morphism 
$$f \triangle g:\uu{Hom}_Z((X,f),(Y,g))\sr Z$$
together with morphisms of the form 
$$\uu{Hom}((X,f),a):\uu{Hom}((X,f),(Y,g))\sr \uu{Hom}((X,f),(Y',g'))$$
for all $a:(Y,g)\sr (Y',g')$ over $Z$, that make $\uu{Hom}((X,f),-)$ into a functor from ${\cal C}/Z$ to $\cal C$.
\item For all $f$, $g$ as above a morphism 
$$ev^{(X,f)}_{(Y,g)}:(\uu{Hom}_Z((X,f),(Y,g)), f\triangle g)\times (X,f) \sr (Y,g)$$
over $Z$ such that for all $h:W\sr Z$ the map
$$adj^{(W,h),(X,f)}_{(Y,g)}:Hom_Z((W,h),(\uu{Hom}_Z((X,f),(Y,g)),f\triangle g))\sr $$$$Hom_Z(((W,h)\times (X,f), h\dd f), (Y,g))$$
given by 
$$u\mapsto (u\times Id_{X})^{f\triangle g,f}\circ ev^{(X,f)}_{(Y,g)}$$
is a bijection and such that the morphisms $ev^{(X,f)}_{(Y,g)}$ are natural in $Y$.
\end{enumerate}
A locally cartesian closed (pre-)category is a (pre-)category together with a locally cartesian closed structure on it.
\end{definition}
If a locally cartesian closed category is given with a final object $pt$ we will write $X\times Y$ for $(X,\pi_X)\times_{pt}(Y,{\pi_Y})$ where $\pi_X$ and $\pi_Y$ are the unique morphisms from $X$ and $Y$ respectively to $pt$.

By definition the objects $(\uu{Hom}((X,f),(Y,g)),f\triangle g)$ of ${\cal C}/Z$ are functorial only in $(Y,g)$. Their functoriality in $(X,f)$ is a consequence of a lemma. For $f:X\sr Z$, $f':X'\sr Z$, $g:Y\sr Z$ and $h:X'\sr X$ such that $h\circ f=f'$ let 
$$\uu{Hom}_Z(h,(Y,g)):\uu{Hom}_Z((X,f),(Y,g))\sr \uu{Hom}_Z((X',f'),(Y,g))$$
be the unique map whose adjoint 
$$adj(\uu{Hom}_Z(h,(Y,g))):(\uu{Hom}_Z((X,f),(Y,g)), f\triangle g)\times_Z (X',f')\sr (Y,g)$$
equals $(Id_{\uu{Hom}_Z((X,f),(Y,g))}\times h)^{f\triangle g, f}\circ ev^{X}_{Y}$. Then one has:
\begin{lemma}
\llabel{2015.04.10.l1}
The morphisms $\uu{Hom}_Z(h,(Y,g))$ satisfy the equations
$$\uu{Hom}_Z(h,(Y,g))\circ (f'\triangle g)=f\triangle g$$
and the equations
$$\uu{Hom}_Z(h_1\circ h_2,(Y,g))=\uu{Hom}(h_2,(Y,g))\circ \uu{Hom}(h_1,(Y,g))$$
$$\uu{Hom}_Z(Id,(Y,g))=Id$$
making $\uu{Hom}_Z(-,(Y,g))$ into a contravariant functor from ${\cal C}/Z$ to itself. In addition, for each $h':(Y,g)\sr (Y,g')$ the square
$$
\begin{CD}
\uu{Hom}_Z((X',f'),(Y,g)) @>\uu{Hom}_Z((X',f'),h')>> \uu{Hom}_Z((X',f'),(Y',g'))\\
@V\uu{Hom}_Z(h,(Y,g)) VV @VV\uu{Hom}_Z(h,(Y',g')) V\\
\uu{Hom}_Z((X,f),(Y,g)) @>\uu{Hom}_Z((X,f),h')>> \uu{Hom}_Z((X,f),(Y',g'))
\end{CD}
$$
commutes.
\end{lemma}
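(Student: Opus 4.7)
The plan is to exploit the universal property defining $\uu{Hom}_Z(h,(Y,g))$: it is the unique morphism over $Z$ whose image under the bijection $adj$ is the specified map $(Id\times h)^{f\triangle g, f}\circ ev^X_Y$. Consequently, to verify each of the stated identities it suffices to show that the two sides have the same image under $adj$, since $adj$ is a bijection.

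I would first dispatch the two easy claims. The identity $\uu{Hom}_Z(h,(Y,g))\circ (f'\triangle g)=f\triangle g$ requires no real calculation: by construction $\uu{Hom}_Z(h,(Y,g))$ lives in $Hom_Z$, i.e.\ is a morphism over $Z$, which is precisely this statement. The case $\uu{Hom}_Z(Id,(Y,g))=Id$ is equally direct: the adjoint of the identity equals $(Id\times Id)^{f\triangle g, f}\circ ev^X_Y=ev^X_Y$, which is exactly the prescribed adjoint of $\uu{Hom}_Z(Id,(Y,g))$ obtained by setting $h=Id$ in the defining formula.

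For the functoriality identity on compositions and for the commuting square, the main tool will be Lemma \ref{2015.05.14.l1}, from which one derives the following naturality property of $adj$: for composable morphisms $\alpha$ and $u$ over $Z$, $adj(\alpha\circ u)=(\alpha\times Id_X)^{h_W, f}\circ adj(u)$, where the shift of superscript from $(f\triangle g, f)$ to $(h_W, f)$ uses the over-$Z$ identity $u\circ(f\triangle g)=h_W$. Applying this to $\uu{Hom}(h_2,(Y,g))\circ \uu{Hom}(h_1,(Y,g))$, substituting the defining adjoint of each factor, and invoking Lemma \ref{2015.05.14.l1} once more to combine the two resulting fiber product morphisms $(Id\times h_1)$ and $(Id\times h_2)$ into the single $(Id\times (h_1\circ h_2))$, one recovers the prescribed adjoint of $\uu{Hom}(h_1\circ h_2,(Y,g))$; uniqueness then yields the equation. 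The commuting square is handled by the same template, using in addition the naturality of $ev$ in its second variable supplied by the locally cartesian closed structure.

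The main obstacle will be bookkeeping: each occurrence of the notation $(a\times b)^{g_1,g_2}$ carries specific structure maps in its superscript, and these shift under the over-$Z$ identities $\uu{Hom}(h)\circ(f'\triangle g)=f\triangle g$ and $h\circ f=f'$. Lemma \ref{2015.05.14.l1} encodes exactly the required rewriting, so the argument reduces to a careful but routine application of that lemma together with the defining adjoint of each $\uu{Hom}(h,-)$.
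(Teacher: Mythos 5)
Your proposal is correct in outline but takes a genuinely different route from the paper: the paper's entire proof is a citation of MacLane's theorem on adjunctions with a parameter (\cite[Theorem 3, p.100]{MacLane}), with the commuting square read off from the ``bifunctor'' clause of that theorem, whereas you give a direct verification by comparing adjoints under the bijection $adj$ and appealing to uniqueness. Your route is more in the spirit of the paper's stated ``formalization-ready'' style, since it replaces an external reference by explicit computations with the primitives actually axiomatized in Definition \ref{2015.03.27.def1}; the paper's route is shorter but leaves the translation between its data and MacLane's setting implicit. One wrinkle in your sketch of the composition law: after substituting the defining adjoints you do not obtain two morphisms $(Id\times h_1)$ and $(Id\times h_2)$ over the \emph{same} internal Hom object. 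Rather, $adj(\uu{Hom}(h_2,(Y,g))\circ\uu{Hom}(h_1,(Y,g)))=(\uu{Hom}(h_2,(Y,g))\times Id)\circ(Id\times h_1)\circ ev^{(X',f')}_{(Y,g)}$, while the target is $(Id\times(h_1\circ h_2))\circ ev^{(X,f)}_{(Y,g)}$; identifying these requires, besides Lemma \ref{2015.05.14.l1}, the exchange identity $(Id\times h_2)\circ ev^{(X,f)}_{(Y,g)}=(\uu{Hom}(h_2,(Y,g))\times Id)\circ ev^{(X',f')}_{(Y,g)}$ of Lemma \ref{2015.04.20.l2} (equivalently, a second use of the defining adjoint of $\uu{Hom}(h_2,(Y,g))$, as in the proof of Lemma \ref{2015.05.12.l2}(2)). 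This is a bookkeeping repair, not a conceptual gap; with it your argument goes through, and the remaining claims (the over-$Z$ equation, the identity law, and the square via naturality of $ev$ in $Y$) are handled correctly.
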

\begin{proof}
It is a particular case of \cite[Theorem 3, p.100]{MacLane}. The commutativity of the square is a part of the "bifunctor" claim of the theorem. 
\end{proof}
\begin{lemma}
\llabel{2015.04.20.l2}
In a locally cartesian closed category let $f:X\sr Z$, $f':X'\sr Z$, $g:Y\sr Z$ be objects over $Z$ and let $a:X'\sr X$ be a morphism over $Z$. Then the square
$$
\begin{CD}
(\uu{Hom}((X,f),(Y,g)),f\triangle g)\times_Z(X',f')  @>1>>  (\uu{Hom}((X,f),(Y,g)), f\triangle g)\times_Z (X,f) \\
@V2VV @VVev V\\
(\uu{Hom}_Z((X',f'),(Y,g)),f'\triangle g)\times_Z(X',f')  @>ev'>> Y
\end{CD}
$$
where $1$ is $(Id_{\uu{Hom}((X,f),(Y,g))}\times a)^{f\triangle g, f}$ and $2$ is $(\uu{Hom}(a, (Y,g))\times Id_{X'})^{f'\triangle g, f'}$, commutes.
\end{lemma}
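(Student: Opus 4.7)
The plan is to unfold the definition of $\uu{Hom}_Z(a,(Y,g))$ given in the paragraph immediately preceding Lemma \ref{2015.04.10.l1}, and then to equate two different ways of writing out its image under the adjunction bijection supplied by Definition \ref{2015.03.27.def1}. Write $H := \uu{Hom}_Z((X,f),(Y,g))$ and $H' := \uu{Hom}_Z((X',f'),(Y,g))$; both composites in the square to be shown commutative are morphisms
$$
(H, f\triangle g)\times_Z (X',f') \longrightarrow (Y,g)
$$
over $Z$ (noting that $a\circ f = f'$, so the lower-left vertex of the square is well-typed).

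First, I would invoke the very definition of $\uu{Hom}_Z(a,(Y,g)) : (H, f\triangle g)\to (H', f'\triangle g)$: it is the unique morphism over $Z$ whose image under $adj^{(H,f\triangle g),(X',f')}_{(Y,g)}$ equals
$$
(Id_H \times a)^{f\triangle g,\, f} \circ ev^{(X,f)}_{(Y,g)},
$$
and this is exactly the composite $1\circ ev$ (top then right) in the square.

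Second, I would apply the general formula of Definition \ref{2015.03.27.def1} for the adjoint transpose of an arbitrary $u:(H, f\triangle g)\to (H', f'\triangle g)$ over $Z$, namely
$$
u \mapsto (u\times Id_{X'})^{f'\triangle g,\, f'} \circ ev^{(X',f')}_{(Y,g)}.
$$
Substituting $u = \uu{Hom}_Z(a,(Y,g))$ gives precisely the composite $2\circ ev'$ (left then bottom) in the square. Since both computations produce the same adjoint transpose, they agree, and the square commutes.

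The only auxiliary point that needs explicit mention is that $\uu{Hom}_Z(a,(Y,g))$ is indeed a morphism over $Z$, so that the adjunction bijection applies to it; this is the identity $\uu{Hom}_Z(a,(Y,g))\circ (f'\triangle g) = f\triangle g$ which is part of Lemma \ref{2015.04.10.l1} and follows by postcomposing the defining adjoint formula with the projection to $Z$. I do not expect any real obstacle: once the two candidate expressions for the adjoint are written side by side, the equality is a tautological consequence of the uniqueness clause in the definition of the adjunction transpose, and no further chase is required.
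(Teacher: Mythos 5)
Your proof is correct and follows essentially the same route as the paper's: both paths in the square are identified as the adjoint transpose of the single morphism $\uu{Hom}_Z(a,(Y,g))$ --- the top-right path by the defining property of that morphism, the left-bottom path by the general formula for $adj$ --- so they coincide. No further comment is needed.
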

\begin{proof}
Let us show that both paths in the square are adjoints to $\uu{Hom}(a, (Y,g))$. For the path that goes through the upper right corner it follows from the definition of  $\uu{Hom}(a, (Y,g))$ as the morphism whose adjoint is $(Id\times a)\circ ev$. For the path that goes through the lower left corner it follows from the definition of adjoint applied to $\uu{Hom}(a,(Y,g))$. Indeed, the adjoint to this morphism is
$$adj(\uu{Hom}(a,(Y,g)))=(\uu{Hom}(a,(Y,g))\times Id_{X'})\circ ev'$$
\end{proof}

\begin{lemma}
\llabel{2015.05.12.l2}
Let $\cal C$ be a locally cartesian closed category. Let $Z$, $(X,f),(Y,g),(W,h)$ be as above. 
\begin{enumerate}
\item Let $(Y',g')$ be an object over $Z$ and $a:(Y,g)\sr (Y',g')$ a morphism over $Z$. Then for any $b\in Hom_Z((W,h),\uu{Hom}_U((X,f),(Y,g)))$ one has
$$adj(b\circ \uu{Hom}_Z((X,f),a))=adj(b)\circ a$$
\item Let $(X',f')$ be an object over $Z$ and $a:(X',f')\sr (X,f)$ a morphism over $Z$. Then for any $b\in Hom_Z((W,h),\uu{Hom}_U((X,f),(Y,g)))$  one has
$$adj(b\circ \uu{Hom}_Z(a,(Y,g)))=(Id_{W}\times a)^{h,f}\circ adj(b)$$
\item Let $(W',h')$ be an object over $Z$ and $a:(W',h')\sr (W,h)$ a morphism over $Z$. Then for any $b\in Hom_Z((W,h),\uu{Hom}_U((X,f),(Y,g)))$ one has
$$adj(a\circ b)=(a\times Id_{X})^{h,f}\circ adj(b)$$
\end{enumerate}
\end{lemma}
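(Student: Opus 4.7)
The plan is to unfold the definition $adj(u) = (u\times Id_X)^{h,f}\circ ev$ on both sides and reduce each identity to either the naturality of $ev$ in its second argument (built into Definition~\ref{2015.03.27.def1}), the characterization of $\uu{Hom}_Z(a,(Y,g))$ from Lemma~\ref{2015.04.20.l2}, or the functoriality of $(-\times-)$ from Lemma~\ref{2015.05.14.l1}. All three parts are routine diagram chases once one identifies which of these three tools handles which side.

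For part (1), I would write
\[
adj(b\circ \uu{Hom}_Z((X,f),a)) = \bigl((b\circ\uu{Hom}_Z((X,f),a))\times Id_X\bigr)^{h,f}\circ ev_{(Y',g')},
\]
then apply Lemma~\ref{2015.05.14.l1} to split this as $(b\times Id_X)^{f\triangle g,f}\circ (\uu{Hom}_Z((X,f),a)\times Id_X)^{h,f}\circ ev_{(Y',g')}$, and finally recognize the last two factors as $ev_{(Y,g)}\circ a$ by the naturality of $ev$ in $Y$; this produces $adj(b)\circ a$.

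For part (2), I would start from the same unfolding and use Lemma~\ref{2015.04.20.l2} to rewrite $(\uu{Hom}_Z(a,(Y,g))\times Id_{X'})^{f'\triangle g,f'}\circ ev'$ as $(Id_{\uu{Hom}_Z((X,f),(Y,g))}\times a)^{f\triangle g,f}\circ ev$. Then splitting $((b\circ \uu{Hom}_Z(a,(Y,g)))\times Id_{X'})^{h,f'}$ via Lemma~\ref{2015.05.14.l1} as $(b\times Id_{X'})^{f\triangle g, f'}\circ (\uu{Hom}_Z(a,(Y,g))\times Id_{X'})^{f'\triangle g,f'}$, followed by a second application of Lemma~\ref{2015.05.14.l1} to combine $(b\times Id_{X'})$ and $(Id\times a)$ into $(Id_W\times a)^{h,f}\circ (b\times Id_X)^{f\triangle g,f}$, yields exactly $(Id_W\times a)^{h,f}\circ adj(b)$. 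Part (3) is the most direct: just apply Lemma~\ref{2015.05.14.l1} to $((a\circ b)\times Id_X)^{h',f} = (a\times Id_X)^{h,f}\circ (b\times Id_X)^{f\triangle g,f}$ and compose on the right with $ev$.

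The main obstacle, such as it is, is bookkeeping: one must keep track of the correct superscripts $(h,f)$, $(f\triangle g, f)$, $(h, f')$, etc.\ on the morphisms $(-\times-)$ so that every invocation of Lemma~\ref{2015.05.14.l1} is applied in a setting where the source and target really are the claimed fiber products. Once these annotations are checked, each of the three claims follows from one algebraic identity between compositions in ${\cal C}/Z$.
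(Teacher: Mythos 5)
Your proof is correct and takes essentially the same route as the paper's: in each part you unfold $adj(u)=(u\times Id)\circ ev$, split the product morphism with Lemma \ref{2015.05.14.l1}, and then close with the naturality of $ev$ in $(Y,g)$ for part (1), with the adjoint characterization of $\uu{Hom}_Z(a,(Y,g))$ together with the interchange $(b\times Id)\circ(Id\times a)=(b\times a)=(Id\times a)\circ(b\times Id)$ for part (2), and with functoriality of $(-\times-)$ alone for part (3). The only cosmetic difference is that where you cite Lemma \ref{2015.04.20.l2} the paper appeals directly to the definition of $\uu{Hom}_Z(a,(Y,g))$ as the map whose adjoint is $(Id\times a)\circ ev$, which is the same fact.
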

\begin{proof}
The proof of the first case is given by 
$$adj(b\circ \uu{Hom}_Z((X,f),a))=((b\circ \uu{Hom}_Z((X,f),a))\times Id_X)^{f\triangle g', f}\circ ev^{(X,f)}_{(Y',g')}=$$
$$(b\times Id_X)^{f\triangle g, f}\circ (\uu{Hom}_Z((X,f),a))\times Id_X)^{f\triangle g', f}\circ ev^{(X,f)}_{(Y',g')}=$$$$(b\times Id_X)^{f\triangle g, f}\circ ev^{(X,f)}_{(Y,g)}\circ a=adj(b)\circ a$$
where the second equality holds by Lemma \ref{2015.05.14.l1} and the third equality by the naturality axiom for morphisms $ev^{(X,f)}_{(Y,g)}$ in $(Y,g)$. 

The proof of the second case is given by the following sequence of equalities where we use the notation $Hm$ for $\uu{Hom}_Z(a,(Y,g))$ as well as a number of other abbreviations:
$$adj(b\circ Hm)=((b\circ Hm)\times Id)\circ ev =(b\times Id)\circ (Hm\times Id)\circ ev=(b\times Id)\circ adj(Hm)=$$
$$(b\times Id)\circ (Id\times a)\circ ev=(b\times a)\circ ev=(Id\times a)\circ (b\times Id)\circ ev=(Id\times a)\circ adj(b)$$
The proof of the third case is given by
$$adj(a\circ b)=((a\circ b)\times Id_X) \circ ev^{(X,f)}_{(Y,g)}=(a\times Id_X)\circ (b\times Id_X)\circ ev^{(X,f)}_{(Y,g)}=$$
$$(a\times Id_X)\circ adj(b)$$
where the second equality holds by Lemma \ref{2015.05.14.l1}. 

Lemma is proved. 
\end{proof}

\begin{example}\rm
\llabel{2015.05.20.ex1}
The following example shows that there can be many different structures of a category with fiber products on a (pre-)category and also many locally cartesian closed structures.

Let us take as our (pre-)category the (pre-)category $preStn$ whose objects are natural numbers and $Hom(n,m)=Hom(\{1,\dots,n\},\{1,\dots,m\})$.  

Since every isomorphism class contains exactly one object every auto-equivalence of this category is an automorphism. Let $F$ be such an automorphisms. It is easy to see that it must be identity on the set of objects. Let $X=\{1,2\}$. Consider $F$ on $End(X)$. Since $F$ must respect unity and compositions, $F$ must take $Aut(X)$ to itself and must act on it by identity. If $1$ and $\sigma$ are the two elements of $Aut(X)$ we conclude that $F(1)=1$ and $F(\sigma)=\sigma$. 

Let us choose now any structure $str_0$ of a category with fiber products on $preStn$ and let us consider two structures $str_1$ and $str_{\sigma}$ that are obtained by choosing all the fiber squares as in $str_0$ and the square for the pair $(Id_X,Id_X)$ to be, correspondingly, as follows:
\begin{eq}\llabel{2015.05.20.sq1}
\begin{CD}
X @>Id_X>> X\\
@V Id_X VV @VV Id_X V\\
X @>Id_X>> X
\end{CD}
\spc\spc\mbox{\rm for $str_1$ and}\spc\spc
\begin{CD}
X @>\sigma>> X\\
@V \sigma VV @VV Id_X V\\
X @>Id_X>> X
\end{CD}
\spc\spc\mbox{\rm for $str_{\sigma}.$}
\end{eq}
The preceding discussion of the auto-equivalences of $preStn$ shows that there is no auto-equivalence which would transform $str_1$ into $str_{\sigma}$. 

The (pre-)category $preStn$ also has a locally cartesian closed structure that can be modified so that its underlying fiber product structures are $str_1$ and $str_{\sigma}$. This shows that $preStn$ has at least two locally cartesian closed structures that are not interchanged by auto-equivalences of $preStn$.
\end{example}
\begin{remark}\rm
\llabel{2015.05.20.rem1}
The previous example has a continuation in the univalent foundations where there is a notion of a category and pre-category. There one expects it to be true that the type of fiber square structures and the type of locally cartesian closed structures on a category (as opposed to those on a general pre-category) are of h-level 1, i.e., classically speaking are either empty or contain only one element. 

In addition any such structure on a pre-category should define a structure of the same kind on the Rezk completion of this pre-category with all the different structures on the pre-category becoming equal on the Rezk completion. In the case of the previous example the Rezk completion of $preStn$ is the category $FSets$ of finite sets and in view of the univalence axiom for finite sets the two pull-back squares of \ref{2015.05.20.sq1} will become equal in $FSets$. 
\end{remark}

\def\cprime{$'$}

%\bibliography{../../../alggeom}
%\bibliographystyle{plain}

\end{document}